\documentclass[11pt]{amsart}
\usepackage[T1]{fontenc} 
\usepackage{amsmath,amsthm,amsfonts,amssymb,bm,graphicx,xcolor}

\usepackage
{hyperref}
\hypersetup{colorlinks=true,citecolor=blue,linkcolor=blue,urlcolor=blue,
pdfstartview=FitH }

\textwidth=6in \textheight=8.5in \topmargin 0cm \oddsidemargin 0cm
\evensidemargin 0cm

\theoremstyle{plain}
\newtheorem{theorem}{Theorem}
\newtheorem{cor}{Corollary}
\newtheorem{lemma}{Lemma}
\newtheorem{prop}[cor]{Proposition}

\theoremstyle{definition}

\renewcommand{\geq}{\geqslant}
\renewcommand{\leq}{\le}
\renewcommand{\mod}{\mathrm{mod}\,}

\newcommand{\Z}{\mathbb{Z}}
\newcommand{\Q}{\mathbb{Q}}
\newcommand{\N}{\mathbb{N}}

\newcommand{\G}{\mathbb{G}}

\newcommand{\bs}{\backslash}
\newcommand{\cb}[1]{\left\{{#1}\right\}}
\newcommand{\cbm}[2]{\left\{{#1}\; \middle|\; {#2}\right\}}

\newcommand{\height}{\operatorname{ht}}

\newcommand{\ol}[1]{\overline{#1}}

\newcommand{\rb}[1]{\left({#1}\right)}

\newcommand{\sbe}{\subseteq}

\newcommand{\ul}[1]{\underline{#1}}

\newcommand{\Hom}{\operatorname{Hom}}

\newcommand{\GL}{\operatorname{GL}}

\newcommand{\SL}{\operatorname{SL}}

\newcommand{\hra}{\hookrightarrow}

\newcommand{\mc}{\mathcal}
\newcommand{\mf}{\mathfrak}

\newcommand{\bdd}{\begin{center}\begin{tikzcd}}
\newcommand{\bd}{\begin{tikzcd}}
\newcommand{\edd}{\end{tikzcd}\end{center}}
\newcommand{\ed}{\end{tikzcd}}
\newcommand{\bdp}{\begin{center}\begin{tikzpicture}}
\newcommand{\edp}{\end{tikzpicture}\end{center}}
\newcommand{\bi}{\begin{itemize}}
\newcommand{\ei}{\end{itemize}}
\newcommand{\bt}{\begin{tikzpicture}}
\newcommand{\et}{\end{tikzpicture}}
\newcommand{\ba}{\[\begin{aligned}}
\newcommand{\ea}{\end{aligned}\]}
\newcommand{\bp}{\begin{pmatrix}}
\newcommand{\ep}{\end{pmatrix}}
\newcommand{\bv}{\begin{vmatrix}}
\newcommand{\ev}{\end{vmatrix}}
\newcommand{\bb}{\begin{bmatrix}}
\newcommand{\eb}{\end{bmatrix}}
\newcommand{\bB}{\begin{Bmatrix}}
\newcommand{\eB}{\end{Bmatrix}}
\newcommand{\bea}{\begin{enumerate}[label=(\alph*)]}
\newcommand{\ber}{\begin{enumerate}[label=(\roman*)]}
\newcommand{\ben}{\begin{enumerate}[label=(\arabic*)]}
\newcommand{\ee}{\end{enumerate}}

\numberwithin{equation}{section}

\makeatletter
\def\Ddots{\mathinner{\mkern1mu\raise\p@
\vbox{\kern7\p@\hbox{.}}\mkern2mu
\raise4\p@\hbox{.}\mkern2mu\raise7\p@\hbox{.}\mkern1mu}}
\makeatother
\makeatletter
\DeclareRobustCommand\widecheck[1]{{\mathpalette\@widecheck{#1}}}
\def\@widecheck#1#2{%
    \setbox\z@\hbox{\m@th$#1#2$}%
    \setbox\tw@\hbox{\m@th$#1%
       \widehat{%
          \vrule\@width\z@\@height\ht\z@
          \vrule\@height\z@\@width\wd\z@}$}%
    \dp\tw@-\ht\z@
    \@tempdima\ht\z@ \advance\@tempdima2\ht\tw@ \divide\@tempdima\thr@@
    \setbox\tw@\hbox{%
       \raise\@tempdima\hbox{\scalebox{1}[-1]{\lower\@tempdima\box
\tw@}}}%
    {\ooalign{\box\tw@ \cr \box\z@}}}
\makeatother

\begin{document}

\author{Valentin Blomer}
\address{Universit\"at Bonn, Mathematisches Institut, Endenicher Allee 60, 53115 Bonn, Germany}
\email{blomer@math.uni-bonn.de}
 
\author{Siu Hang Man}
\address{Charles University, Faculty of Mathematics and Physics, Department of Algebra, Sokolovsk\' a 83, 186~75 Praha~8, Czechia}
\email{shman@karlin.mff.cuni.cz}

 \title{Bounds for Kloosterman sums on ${\rm GL}(n)$}

\thanks{The first author was supported in part by the DFG-SNF Lead Agency Program grant BL 915/2-2,  Germany's Excellence Strategy grant EXC-2047/1 - 390685813 and ERC
Advanced Grant 101054336. The second author was supported by the European Research Council grant 101001179 and the Czech Science Foundation GA\v CR grant 21-00420M}

\keywords{Kloosterman sums}

\begin{abstract}   This paper establishes  power-saving bounds for Kloosterman sums associated with the long Weyl element for ${\rm GL}(n)$ for arbitrary $n \geq 3$, as well as for another type of Weyl element of order 2.  These bounds are obtained by establishing an explicit representation as exponential sums. As an application we go beyond Sarnak's density conjecture for the principal congruence subgroup of prime level. We also obtain power-saving bounds for all Kloosterman sums on ${\rm GL}(4)$. 
 \end{abstract}

\subjclass[2010]{Primary  11L05}

\setcounter{tocdepth}{2}  \maketitle 

\maketitle

\section{Introduction}

\subsection{Classical Kloosterman sums} Kloosterman sums belong the most universal exponential sums in number theory, algebra and automorphic forms. The classical Kloosterman sum for parameters $m, m'\in \Bbb{Z}$ and a modulus $c \in \Bbb{N}$ is
\begin{equation}\label{1}
S(m, m', c) =\underset{d\, (\text{mod }c)}{\left.\sum \right.^{\ast}} e\Big(\frac{md + m'\bar{d}}{c}\Big)
\end{equation}
where $d\bar{d} \equiv 1$ (mod $c$) and the asterisk indicates that the sum is over $(d, c) = 1$. Kloosterman \cite{Kl} introduced this type of exponential sum in his thesis as a crucial tool to study the representation of integers by positive quaternary quadratic forms.  Since then they have been ubiquitous in number theory, for instance as the finite Fourier transform of exponential sums containing modular inverses, as Fourier coefficients of classical Poincar\'e series, in various instances of delta-symbol methods and perhaps most prominently in the relative trace formula of Petersson-Kuznetsov type (to which Poincar\'e series are a precursor). One of their key properties is the Weil bound \cite{We}  (complemented by Sali\'e for powerful moduli \cite{Sa})
\begin{equation}\label{weil}
|S(m, m', c)| \leq \tau(c) c^{1/2} (m, m', c)^{1/2}
\end{equation}
where $\tau$ denotes the divisor function. This essentially exhibits square root cancellation relative to the trivial bound $|S(m, m', c)| \leq \phi(c) $ where $\phi$ is Euler's $\phi$-function. \\

We  proceed to describe the more general set-up of Kloosterman sums. Let $G$ be a reductive group, $T$ the maximal torus, and $U$ the standard maximal unipotent subgroup. Let $N = N_G(T)$ be the normalizer of $T$ in $G$, $W = N/T$ the Weyl group and $\omega : N \rightarrow W$ the quotient map. For $w\in W$, we define $U_w = U \cap w^{-1} U^{\top} w$ and $\bar{U}_w = U \cap w^{-1} U w$. For $n \in N(\Bbb{Q}_p)$ and a finite index subgroup $\Gamma \subseteq G(\Bbb{Z}_p)$  we define
$$C(n) = U(\Bbb{Q}_p) nU(\Bbb{Q}_p) \cap \Gamma, \quad  X(n) = U(\Bbb{Z}_p)\backslash C(n)/U_{\omega(n)}(\Bbb{Z}_p)$$
and the projection maps  
$$u : X(n) \rightarrow U(\Bbb{Z}_p)\backslash U(\Bbb{Q}_p), \quad u' : X(n) \rightarrow U_{\omega(n)}(\Bbb{Q}_p)/U_{\omega(n)}(\Bbb{Z}_p). $$
Let $\psi, \psi'$ be two characters on $U(\Bbb{Q}_p)$ that are trivial on $U(\Bbb{Z}_p)$. The (local) Kloosterman sum is then defined to be 
\begin{equation}\label{general}
\text{Kl}_p(\psi, \psi', n) = \sum_{x = u(x) n u'(x) \in X(n)} \psi(u(x)) \psi'(u'(x)).
\end{equation}
In this paper we will take $\Gamma = G(\Bbb{Z}_p)$, but we may think of $\Gamma$ as a   more general ``congruence subgroup''. Of course, by the Chinese remainder theorem it suffices to study local Kloosterman sums. There is some flexibility in the definition. For instance, some authors have $U_{\omega(n)}$ on the left and $U$ on the right.  \\

\textbf{Example 1:} For $G = {\rm GL}(2)$, $n = \left(\begin{smallmatrix} & -1/c\\ c & \end{smallmatrix}\right)$ (so that $U_{\omega(n)} = U)$ with $c = p^r$, say, $\psi\left(\left(\begin{smallmatrix} 1 & x\\ & 1\end{smallmatrix}\right)\right) = \psi_0(mx)$, $\psi'\left(\left(\begin{smallmatrix} 1 & y\\ & 1\end{smallmatrix}\right)\right) = \psi_0(m'y)$ for the standard additive character $\psi_0$ on $\Bbb{Q}_p/\Bbb{Z}_p$ and $m, m' \in \Bbb{Z}$ we have
$$\left(\begin{matrix}1 & x\\ & 1\end{matrix}\right)n  \left(\begin{matrix}1 & y\\ & 1\end{matrix}\right) = \left(\begin{matrix}cx & -1/c + cxy\\ c& cy\end{matrix}\right) \in {\rm GL}_2(\Bbb{Z}_p)$$
 if and only if $x, y \in  p^{-r}\Bbb{Z}_p/\Bbb{Z}_p$, $xy - p^{-2r} \in p^{-r}\Bbb{Z}_p$, and we recover \eqref{1}. \\
 
 \textbf{Example 2:} For $G = {\rm GL}(3)$ and $n = \left(\begin{smallmatrix} && -1/c_1\\ &c_1/c_2 & \\ c_2&&\end{smallmatrix}\right)$ in the big Bruhat cell  an explicit form of the corresponding Kloosterman sum becomes already much more complicated. Using Pl\"ucker coordinates, Bump, Friedberg and Goldfeld \cite[Section 4]{BFG} derived for two general characters $\psi_{n_1, n_2}$ and $\psi_{m_1, m_2}$ of $3$-by-$3$ upper triangular unipotent matrices the explicit formula for the global Kloosterman sum
 $$\sum_{\substack{B_1, C_1 \, (\text{mod }c_1)\\ B_2, C_2\, (\text{mod } c_2)\\(B_j, C_j, c_j) =1\\ c_1c_2 \mid c_1C_2 + B_1B_2 + C_1c_2 }}  e\Big(\frac{m_1B_1 + n_1(Y_1c_2 - Z_1B_2)}{c_1} + \frac{m_2B_2 + n_2(Y_2c_1 - Z_2B_1)}{c_2}\Big)$$
where $Y_1, Y_2, Z_1, Z_2$ are chosen such that $Y_jB_j + Z_jC_j \equiv 1\, (\text{mod } c_j)$ for $j = 1, 2$.\\

We see from these examples that Kloosterman sums, while defined rather naturally in terms of the Bruhat decomposition, turn out to be extremely complicated exponential sums. We are well equipped with deep technology from algebraic geometry and $p$-adic analysis to bound general exponential sums in a relatively sharp fashion, but it is unfortunately not clear a priori how to apply these to general Kloosterman sums given by \eqref{general}. In fact, even determining the size of $X(n)$ and hence the ``trivial'' bound for \eqref{general} (ignoring cancellation in the characters) is a deep result of D{\k a}browski and Reeder \cite{DR}, see Lemma \ref{lem:repcount} below. 

One case is classical: for $G = {\rm GL}(n)$ and the Weyl element $\left(\begin{smallmatrix} & 1 \\ I_{n-1} & \end{smallmatrix}\right)$ (``Voronoi element'') the Kloosterman sum \eqref{general} becomes a hyper-Kloosterman sum \cite[Theorem B]{Fr} in the sense of Deligne and hence its size is well-understood. In addition we have good bounds for Kloosterman sums on ${\rm GL}(3)$ \cite{St, BFG, DF}, ${\rm Sp}(4)$ \cite{Ma} and for the long Weyl element on ${\rm GL}(4)$  \cite[Appendix]{GSW}. In all other cases, the best we know is the ``trivial'' bound of D{\k a}browski-Reeder \cite{DR}. A typical strategy,  employed for instance in \cite{BFG, GSW, Br}, is to use Pl\"ucker coordinates to understand the Bruhat decomposition of ${\rm GL}(n)$ in an explicit fashion. Unfortunately, for large $n$ the Pl\"ucker relations become extremely complicated, and it seems that this path is not very promising in general. 

In this paper, we investigate Kloosterman sums for the general linear group $G = {\rm GL}(n+1)$ for \emph{arbitrary} $n \geq 2$. (The notation $n+1$ in place of $n$ is slightly more convenient.) We parametrize the unipotent upper triangular matrices $U$  of dimension $n+1$ as 
$$u = \left(\begin{smallmatrix} 1 &u_{11} & u_{12} & \cdots & u_{1n}\\ & 1 & u_{22} & \ldots & u_{2n}\\   && \ddots&& \vdots\\ &&&1 & u_{nn}\\ &&&& 1\end{smallmatrix}\right).$$
We consider two characters  $\psi, \psi'$ of $U(\Q_p)/U(\Bbb{Z}_p)$, defined by
\begin{equation}\label{char}
\psi(u) = e\Big(\sum_{j=1}^n \psi_j u_{jj}\Big), \quad \psi'(u) = e\Big(\sum_{j=1}^n \psi'_j u_{jj}\Big)
\end{equation}
for $\psi_j, \psi'_j \in \Bbb{Z}$, $1\le j \le n$. 
The Weyl group of $G$ consists of $(n+1)!$ permutation matrices, but only Weyl elements of the form
\begin{equation*} 
w = \left( \begin{smallmatrix} &  & & I_{d_1}\\  & & I_{d_2} &\\ & \Ddots & &\\ I_{d_r} & && \end{smallmatrix}\right)
\end{equation*}
with identity matrices $I_{d_j}$ of dimension $d_j$ lead to well-defined objects (see \cite[p.\ 175]{Fr}), which reduces the number of relevant Weyl elements to $2^n$. We consider two of them, namely the long Weyl element
$$w_l =   \left( \begin{smallmatrix} &&&&& \pm 1\\  &&&& \Ddots & \\  &&& - 1\\ &&1\\ &-1\\ 1 \end{smallmatrix}\right),$$
and with a particular application in mind (to be described in Subsection \ref{appl}) 
$$w_{\ast} =   \left( \begin{smallmatrix} &  & \pm 1\\    & -I_{n-1}& \\ 1 & & \end{smallmatrix}\right).$$
The chosen representatives of the Weyl elements $w_l$ and $w_{\ast}$ here satisfy that the determinant of the $(k\times k)$-minors formed by the bottom $k$ rows and appropriate columns have positive determinant. The actual choice of the representatives is unimportant to the theory, but this particular choice makes the computations in later sections more convenient.

We write a typical modulus as $C = (p^{r_1}, \ldots, p^{r_n})$ with $r_j \in \Bbb{N}_0$ which we embed into the torus $T$ as
$C^{\ast} = \text{diag}(p^{-r_1}, p^{r_1 - r_{2}}, \ldots, p^{r_{n-1} - r_n}, p^{r_n})$. The first main achievement of this paper in an explicit and reasonably compact expression of the considered Kloosterman sums as  exponential sums which we are going to describe in the following two subsections. We try to present this in a user-friendly way, which may ease further investigations.

\subsection{Kloosterman sums for $G = {\rm GL}(n+1)$ for the long Weyl element} Our stratification starts with the following decomposition. Let a modulus $C$ be given as above with an exponent vector $r = (r_1, \ldots, r_n)$. Let
\begin{equation}\label{mwl}
\mc M_{w_l}(r) := \Big\{\ul m = (m_{ij})_{1\le i \le j\le n} \in \N_0^{n(n+1)/2} \mid \sum_{i\le k \le j} m_{ij} = r_k, \; 1\le k \le n\Big\}.
\end{equation}
For $\ul m \in \mc M_{w_l}(r)$ we define
$$
\mc C_{w_l}(\ul m) := \Big\{\ul c = (c_{ij})_{1\le i \le j \le n} \mid c_{ij} \in \Bbb{Z}/{\textstyle \rb{\prod_{k=j}^n p^{m_{ik}}}} \Bbb{Z}, \; (c_{ij},p^{m_{ij}}) = 1\Big\}
$$
as well as the partial Kloosterman sum
${\rm Kl}_{p}(\underline{m}, \psi, \psi', w_l)$ as 
\begin{equation}\label{partial}
\begin{split}
&\sum_{\underline{c} \in \mathcal{C}_{w_l}(\underline{m})}  e\Bigg(\sum_{j=1}^n \sum_{i=1}^j\psi_j c_{i(j-1)} \overline{c_{ij}} \prod_{k=1}^{i-1} p^{m_{k(j-1)}} \prod_{k=1}^i p^{-m_{kj}}\\
&  +  \sum_{i=1}^n \sum_{j=1}^i\psi'_i c_{(n+1-i)(n+1-j)} \overline{c_{(n+2-i)(n+1-j)}} \prod_{k=1}^{j-1} p^{m_{(n+2-i)(n+1-k)}} \prod_{k=1}^j  p^{-m_{(n+1-i)(n+1-k)}}\Bigg).
\end{split}
\end{equation}
Here we employ the convention that  $c_{ij} = 1$ and $m_{ij} = 0$ for $i>j$. Moreover, for $1 \leq i \leq j \leq n$ we  set $\overline{c_{ij}} = 0$ when $m_{ij}=0$, and of course a bar means the modular inverse. It is not obvious from the definition, but will follow from the proof, that this expression is well-defined (i.e.\ independent of the system of representatives chosen for the $c_{ij}$). 

As an example, the case $n=3$ (i.e.\ $G={\rm GL}(4)$) is spelled out explicitly in \eqref{81} in the appendix, where in addition all Weyl elements for ${\rm GL}(4)$ are analyzed. 


\medskip

 We are now ready to state our first main result. 
\begin{theorem}\label{thm1} For $C = (p^{r_1}, \ldots, p^{r_n})$ we have with the above notation
$${\rm Kl}_{p}(  \psi, \psi', C^{\ast}w_l) = \sum_{\underline{m} \in \mathcal{M}_{w_l}(r)} {\rm Kl}_{p}(\underline{m}, \psi, \psi', w_l)$$
where ${\rm Kl}_{p}(\underline{m}, \psi, \psi', w_l)$ is defined in \eqref{partial}.
\end{theorem}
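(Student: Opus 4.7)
The plan is to parameterize the double coset space $X(C^\ast w_l)$ explicitly by a Plücker-style normal form adapted to the long Weyl element, and then read off the values of the characters on the representatives.

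First, I would write each element of the big Bruhat cell uniquely as $g = u \cdot C^\ast w_l \cdot u'$ with $u, u' \in U(\mathbb{Q}_p)$ and compute the matrix product entry-by-entry. Since $C^\ast w_l$ is supported on the anti-diagonal, each entry $g_{ij}$ collapses to a single sum $\sum_k u_{ik}\cdot \epsilon_k p^{e_k}\cdot u'_{(n+2-k)j}$ with explicit signs $\epsilon_k$ and exponents $e_k$ governed by the differences of the $r_j$. The requirement $g \in G(\mathbb{Z}_p)$ then becomes the integrality of all the $g_{ij}$; because $\det g$ is automatically a unit, this forces the denominators in $u$ and $u'$ to obey a chain of divisibility relations that can be resolved recursively, in the same spirit as the Plücker coordinate analysis of Bump--Friedberg--Goldfeld for ${\rm GL}(3)$.

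Next, I would use the left and right actions of $U(\mathbb{Z}_p)$ on $(u, u')$ to bring each double coset to a canonical form. After reduction, the relevant coordinates of $u$ and $u'$ become rationals with controlled $p$-power denominators. The combinatorial data $\underline m \in \mathcal M_{w_l}(r)$ then emerges as a stratification of the double coset space: for each $k$, the modulus $p^{r_k}$ factors as $\prod_{i \le k \le j} p^{m_{ij}}$, where the $m_{ij}$ record the partial valuations revealed when the chain of integrality conditions is resolved into its building blocks. The constraint $\sum_{i \le k \le j} m_{ij} = r_k$ is precisely the simultaneous compatibility across all $k$, and within each stratum the residual unit information is carried by $\underline c \in \mathcal C_{w_l}(\underline m)$.

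Finally, I would express the diagonal entries $u_{jj}$ and $u'_{jj}$---which alone determine $\psi(u)$ and $\psi'(u')$---as rational combinations of the $c_{ij}$ and their modular inverses, and substitute to recover the exponential appearing in \eqref{partial}. Summing over strata would then yield ${\rm Kl}_p(\psi, \psi', C^\ast w_l)$.

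The main obstacle is this last algebraic step: extracting the $u_{jj}, u'_{jj}$ from the implicit relations between matrix entries and normal-form data requires an iterative computation through nested Plücker-type identities whose bookkeeping becomes intricate for general $n$, and the resulting expression must be shown to be independent of the choice of integer representatives for the $c_{ij}$ (the well-definedness noted after \eqref{partial}). Verifying this invariance is delicate because individual terms are not invariant; only the structured sum over $\mathcal C_{w_l}(\underline m)$ is. Dealing uniformly in $n$ with this combinatorial bookkeeping, while more or less mechanical, is what has prevented such a formula from appearing beyond ${\rm GL}(3)$.
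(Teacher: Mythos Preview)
Your Pl\"ucker-coordinate approach is a valid strategy in principle, but it is precisely the route the paper avoids. In the introduction the authors note that ``for large $n$ the Pl\"ucker relations become extremely complicated, and it seems that this path is not very promising in general.'' Instead, the paper uses the D{\k a}browski--Reeder stratification (Section~\ref{section:DR}): one fixes a reduced expression $w_l = s_{\beta_1}\cdots s_{\beta_l}$ and parameterizes $X(C^\ast w_l)$ not by the Bruhat coordinates $(u,u')$ but by the map $b_{\underline\beta}(\underline a) = b_{\beta_1}(a_1)\cdots b_{\beta_l}(a_l)$, where each $b_{\beta_j}$ is an explicit ${\rm SL}(2)$-block. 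The stratification by $\underline m \in \mathcal M_{w_l}(r)$ and the coset count then fall out of the general theory (Lemmas~1 and~2 of Section~\ref{section:DR}), and what remains is to compute the Bruhat decomposition of the product $b_{\underline\beta}(\underline a)$ inductively in $n$ (Lemma~\ref{lem:wl_Bruhat}) and to verify a system of coset representatives (Lemma~\ref{lem:wl_coset}). The inductive structure---peeling off one simple reflection at a time---tames the bookkeeping you correctly identify as the main obstacle, whereas your approach confronts the full flag-variety combinatorics at once. Your well-definedness worry is also resolved differently: since the $c_{ij}$ arise from a genuine parameterization of coset representatives rather than from solving implicit relations, the exponential sum is well-defined by construction.
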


\textbf{Remarks:} The exact formula of Theorem \ref{thm1} has a number of nice features. 

First of all, we see the ``trivial bound'' of D{\k a}browski-Reeder with bare eyes by simply counting the number of terms in the summation:
\begin{equation}\label{jminusi}
\#\mathcal{C}_{w_l}(\underline{m}) \leq \prod_{1 \leq i \leq j \leq n} \prod_{k = j}^n p^{m_{ik}} = \prod_{1 \leq i \leq j \leq n} p^{(j-i+1)m_{ij}} = \prod_{1 \leq k \leq n} p^{r_k}
\end{equation}
for $\underline{m} \in \mathcal{M}_{w_l}(r)$ as defined in \eqref{mwl}. We are now in a position to exploit cancellation to obtain non-trivial bounds in Corollary \ref{cor3} below. 

 It is also structurally quite interesting because it consists of nested ${\rm GL}(2)$ Kloosterman sums. This is somewhat reminiscent of the archimedean case where the (non-degenerate) ${\rm GL}(n)$-Whittaker function can be expressed as nested integrals of Bessel $K$-functions (i.e.\ ${\rm GL}(2)$ Whittaker functions), see \cite{St}. 
 
 The exact formula can also be used for certain exact evaluations (see below), it can be generalized to congruence subgroups, and can perhaps ultimately be a means for deeper methodological machinery in analytic number theory such as Poisson summation etc.  

\begin{cor}\label{cor3} With the notation as above, there exists a $\delta = \delta_n> 0$ such that 
$${\rm Kl}_{p}(  \psi, \psi', C^{\ast}w_l)  \ll \big(\max_{1 \leq j \leq n} |\psi_j|^{-1/2}_p  \big) \Big(\prod_{1 \leq k \leq n} p^{r_k}\Big)^{1-\delta}.$$
\end{cor}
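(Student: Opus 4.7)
The plan is to invoke Theorem~\ref{thm1}, then for each partial sum \eqref{partial} exploit its nested GL(2) structure to single out one $c$-variable whose summation is a classical Kloosterman sum amenable to the Weil bound; a pigeonhole argument on the exponents $m_{ij}$ supplies the required power saving.

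By Theorem~\ref{thm1},
$$
{\rm Kl}_p(\psi,\psi',C^\ast w_l)=\sum_{\underline{m}\in\mathcal{M}_{w_l}(r)}{\rm Kl}_p(\underline{m},\psi,\psi',w_l),
$$
and an elementary count yields $|\mathcal{M}_{w_l}(r)|\le \prod_k(r_k+1)^{n(n+1)/2}$, which is polynomial in the $r_k$ and absorbable into any power saving of shape $p^{-\delta\sum_k r_k}$. It therefore suffices to bound a single partial sum. Fix $\underline{m}\in\mathcal{M}_{w_l}(r)$; summing the constraints $\sum_{i\le k\le j}m_{ij}=r_k$ over $k$ yields $\sum_{i\le j}(j-i+1)m_{ij}=\sum_k r_k$, so by pigeonhole there is a pair $(i_0,j_0)$ with $m_{i_0 j_0}\ge c_n\sum_k r_k$ for an explicit $c_n>0$ (for example $c_n=6/(n(n+1)(n+2))$).

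Inspection of \eqref{partial} shows that $c_{i_0 j_0}$ enters the phase in up to four positions: as $\overline{c_{i_0 j_0}}$ in the $(i,j)=(i_0,j_0)$ term of the first block (with coefficient $\psi_{j_0}\,c_{i_0,j_0-1}$ times a $p$-power unit), as $c_{i_0 j_0}$ in the $(i,j)=(i_0,j_0+1)$ term (with coefficient $\psi_{j_0+1}\overline{c_{i_0,j_0+1}}$ times a $p$-power unit), and analogously in the second block with $\psi'_{n+2-i_0}$ and $\psi'_{n+1-i_0}$. Fixing all other $c$'s and rescaling $c_{i_0 j_0}$ so as to absorb the $p^{\pm m_{\bullet\bullet}}$ prefactors, the inner sum reduces to a classical Kloosterman sum $S(A,B,p^{m_{i_0 j_0}})$, where $A,B$ are $\Z_p$-linear combinations with unit coefficients of at most four of the character parameters. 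The Weil bound gives
$$
|S(A,B,p^{m_{i_0 j_0}})|\le \tau(p^{m_{i_0 j_0}})\,p^{m_{i_0 j_0}/2}\,(A,B,p^{m_{i_0 j_0}})^{1/2},
$$
and an appropriate choice of $(i_0,j_0)$ consistent with the pigeonhole (for instance, taking $i_0=1$, which suppresses one $\psi'$-contribution so that $B$ is $\psi_{j_0}$ times a unit) bounds the gcd factor by $|\psi_{j_0}|_p^{-1/2}\le \max_j|\psi_j|_p^{-1/2}$. Bounding the remaining variables trivially by their ranges then gives
$$
|{\rm Kl}_p(\underline{m},\psi,\psi',w_l)|\ll_\epsilon \max_j|\psi_j|_p^{-1/2}\,p^{-m_{i_0 j_0}(1/2-\epsilon)}\prod_k p^{r_k},
$$
and inserting the pigeonhole lower bound on $m_{i_0 j_0}$ completes the proof of Corollary~\ref{cor3} with, say, $\delta=c_n/3$.

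The principal technical obstacle lies in the extraction step: one must justify rigorously that the inner $c_{i_0 j_0}$-sum is a Weil--Kloosterman sum of modulus exactly $p^{m_{i_0 j_0}}$, which requires careful tracking of how the many $p^{\pm m_{\bullet\bullet}}$ prefactors in \eqref{partial} combine after rescaling $c_{i_0 j_0}$, matching the coprimality condition $(c_{i_0 j_0},p^{m_{i_0 j_0}})=1$ to the Weil hypothesis, and handling the boundary cases ($i_0=1$, $j_0=i_0$, or $j_0=n$) in which fewer than four of the expected terms are present. A further subtlety is to arrange the pigeonhole selection so that the gcd factor reduces cleanly to the claimed $\max_j|\psi_j|_p^{-1/2}$ rather than involving also the $\psi'_j$; this may require restricting the pigeonhole to the row $i_0=1$ (at the cost of a factor in $\delta$) or exploiting a symmetry interchanging $\psi$ and $\psi'$.
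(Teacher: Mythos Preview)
Your overall strategy---Theorem~\ref{thm1}, then Weil on one variable, then a pigeonhole on the $m_{ij}$---matches the paper's, but the extraction step contains a genuine gap that you flag as a technicality when it is in fact the heart of the matter.

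The claim that the inner $c_{i_0j_0}$-sum is a Kloosterman sum of modulus exactly $p^{m_{i_0j_0}}$ is not correct. The variable $c_{i_0j_0}$ ranges modulo $\prod_{k\ge j_0}p^{m_{i_0k}}$, and the $p$-power prefactors multiplying $c_{i_0j_0}$ and $\overline{c_{i_0j_0}}$ in \eqref{partial} involve \emph{different} combinations of the $m_{\alpha\beta}$ with $(\alpha,\beta)\neq(i_0,j_0)$; no single rescaling absorbs them all. What actually comes out (in the paper's notation) is a Kloosterman sum of modulus $p^{m_{i_0j_0}+O(\mu_{i_0j_0})}$, where $\mu_{i_0j_0}$ is the maximum of certain \emph{other} $m_{\alpha\beta}$, and the Weil saving is only $p^{m_{i_0j_0}/2-O(\mu_{i_0j_0})}$ with an implied constant depending on $n$. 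A plain pigeonhole on the size of $m_{i_0j_0}$ gives no control whatsoever over $\mu_{i_0j_0}$: when several $m_{\alpha\beta}$ are of comparable size (the generic case), the $O(\mu_{i_0j_0})$ loss swamps the $m_{i_0j_0}/2$ gain and you save nothing.

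The paper repairs this by introducing a total ordering
\[
(1,n)<(1,n-1)<\cdots<(1,1)<(2,n)<\cdots<(2,2)<\cdots<(n,n),
\]
designed so that the interfering exponents in the Weil bound for the $c_{ij}$-sum are exactly the $m_{\alpha\beta}$ with $(\alpha,\beta)<(i,j)$ in this ordering. One then selects $(i,j)$ not as the globally largest $m_{ij}$, but as the \emph{first} index in the ordering for which $m_{ij}$ is comparable to $\max_{\alpha,\beta} m_{\alpha\beta}$; by construction all earlier $m_{\alpha\beta}$ are then small, so $\mu_{ij}$ is controlled and the Weil saving survives. Your suggestion to restrict to $i_0=1$ is a step in this direction but is not sufficient on its own (the interference among the $m_{1k}$ for different $k$ remains), and the $\psi\leftrightarrow\psi'$ symmetry you invoke does not bypass the issue either. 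The ordering argument is the missing ingredient.
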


Xinchen Miao kindly informed us that he obtained independently and by a slightly different method a similar bound in \cite[Theorem 5.1]{Mi}. 

In Subsection \ref{refined} we will give a quick argument that shows that one can choose $\delta \gg 1/n^2$. 
 It is an interesting question whether $\delta$ can be chosen independently of $n$. In this particular situation we do not know the answer, but for general Weyl elements the answer is NO, as we will see in the next subsection.



\subsection{Kloosterman sums for $G = {\rm GL}(n+1)$ for $w_{\ast}$}\label{sec13} We now establish similar results for the Weyl element $w_{\ast}$. For this Weyl element we have relatively severe restrictions on the moduli $C = (p^{r_1}, \ldots, p^{r_n})$, see \cite[Proposition 1.3]{Fr}. For instance, if  $\psi, \psi'$ satisfy  $p \nmid \psi_j\psi_j'$ and $n \geq 3$, then the exponents $r_k$ need to form an arithmetic progression. 

In the present situation we define
\begin{equation*}
\mc M_{w_{\ast}}(r) := \Big\{\ul m = (m_{ij})_{\substack{1\le i \le j\le n\\ i = 1 \text{ or } j = n}} \in \N_0^{2n-1} \mid \sum_{i\le k \le j} m_{ij} = r_k, \; 1\le k \le n\Big\}.
\end{equation*}
For $\ul m \in \mc M_{w_{\ast}}(r)$ we define
$$
\mc C_{w_{\ast}}(\ul m) = \cbm{\ul c = (c_{ij})_{\substack{ 1\le i \leq j \le n\\ i = 1 \text{ or } j = n}}}{\begin{array}{l} c_{1j} \in \Bbb{Z}/{\textstyle \prod_{k=j}^n} p^{m_{1k}}\Bbb{Z},\; 1\le j \le n,\\ c_{in}\in \Z/{\textstyle \prod_{k=2}^i} p^{m_{kn}}\Bbb{Z},\; 2\le i \le n,\end{array}  \; (c_{ij},p^{m_{ij}})=1}
$$
as well as the partial Kloosterman sum
${\rm Kl}_{p}(\underline{m}, \psi, \psi', w_{\ast})$ as 
\begin{equation}\label{partial1}
\begin{split}
\sum_{\underline{c} \in \mathcal{C}_{w_{\ast}}(\underline{m})} & e\Bigg(\psi_1 \frac{\overline{c_{11}}}{ p^{m_{11}}} + \sum_{j=2}^n   \psi_j \Big(\frac{c_{1(j-1)}\overline{c_{1j}}}{ p^{m_{1j}}} + \frac{c_{(j+1)n}\overline{c_{jn}}}{ p^{-m_{1(j-1)} + m_{1j} + m_{jn}}} \Big)\\
&+ 
 \psi'_1 \frac{c_{2n}}{ p^{m_{2n}}} + \psi'_n \Big( \frac{c_{1n}\overline{c_{nn}}}{ p^{m_{1n}}} + \frac{c_{1(n-1)}}{ p^{-m_{nn}+m_{1(n-1)}+m_{1n}}}\Big)\Bigg). 
 \end{split}
\end{equation}
Again we employ the convention that  $c_{ij} = 1$ and $m_{ij} = 0$ for $i>j$. Moreover, for $1 \leq i \leq j \leq n$ we  set $\overline{c_{ij}} = 0$ when $m_{ij}=0$. For the case $n=3$, we refer to \eqref{82}.

\begin{theorem}\label{thm2} For $C = (p^{r_1}, \ldots, p^{r_n})$ we have with the above notation
$${\rm Kl}_{p}(  \psi, \psi', C^{\ast}w_{\ast}) = \sum_{\underline{m} \in \mathcal{M}_{w_{\ast}}(r)} {\rm Kl}_{p}(\underline{m}, \psi, \psi', w_{\ast})$$
whenever the Kloosterman sum on the left hand side is well-defined.
\end{theorem}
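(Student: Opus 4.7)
The plan is to mirror the double-coset parametrization used in Theorem~\ref{thm1}, exploiting the more rigid structure of $w_{\ast}$. First, one identifies $U_{w_{\ast}}$ directly: writing out $w_{\ast} u w_{\ast}^{-1}$ for general $u\in U$ and using that $w_{\ast}$ swaps the first and last rows (and columns) while negating the central block, one sees that any entry $u_{ij}$ with $2\le i\le j\le n-1$ lands strictly above the diagonal after conjugation. Hence $U_{w_{\ast}}$ is the subgroup of $U$ whose only off-diagonal entries lie in the first row or the last column, and a right-coset representative $u' \in U_{w_{\ast}}(\Q_p)/U_{w_{\ast}}(\Z_p)$ is parametrized by exactly the pairs $(i,j)$ with $1\le i\le j\le n$ and $i=1$ or $j=n$, matching the index set of \eqref{mr}.

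Next, I would expand $g = u\,C^{\ast}w_{\ast}\,u'$ with $u \in U(\Q_p)/U(\Z_p)$ and $u'$ as above, and impose $g \in G(\Z_p)$. Because $C^{\ast}w_{\ast}$ is supported on the two anti-diagonal corners together with a diagonal middle block, the integrality conditions decouple: the last row of $g$ yields a cascade of $\GL(2)$-type conditions linking the first-row entries $u_{1j}$ of $u$ to $u'_{1j}$, the first column does the symmetric job for the last-column entries $u_{in}$ and $u'_{in}$, while the central block forces the interior entries $u_{ij}$ with $2\le i\le j\le n-1$ to cancel up to a $U(\Z_p)$-translate. After these reductions one obtains a canonical form in which only the parameters $c_{1j}$ and $c_{in}$ remain, with $(c_{ij},p^{m_{ij}})=1$ and nonnegative exponents $m_{ij}$ satisfying $\sum_{i\le k\le j} m_{ij} = r_k$ along each of the $n$ diagonal coordinates---precisely the stratification given by $\mathcal{M}_{w_{\ast}}(r)$.

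The stratification now partitions $X(C^{\ast}w_{\ast})$ according to the $p$-adic valuations of these surviving parameters, and Theorem~\ref{thm2} reduces to evaluating $\psi(u)\psi'(u')$ on each canonical form. The diagonal entries $u_{jj}$ and $u'_{jj}$ can be solved for as rational monomials in the $c_{ij}$ by Cramer-type manipulations of the determinantal equations coming from $g\in G(\Z_p)$, and the resulting expressions match the phase appearing in \eqref{partial1} term by term: the contributions $\psi_j c_{1(j-1)}\overline{c_{1j}}$ arise from the first-row data, the symmetric $\psi'_n$ contributions from the last-column data, and the mixed terms $\psi_j c_{(j+1)n}\overline{c_{jn}}$ and $\psi'_n c_{1(n-1)}$ encode the interaction between the two through the $w_{\ast}$-swap of the extremal coordinates.

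The main obstacle will be the explicit verification that the integrality conditions cut out precisely $\mathcal{C}_{w_{\ast}}(\underline{m})$ rather than a larger or smaller set, and that the character value is independent of the representatives chosen for the $c_{ij}$. Since the parameter set has only $2n-1$ degrees of freedom against $n$ modulus constraints, there is little slack, and this is exactly what imposes the compatibility conditions on the $r_k$ recorded in \cite[Proposition~1.3]{Fr}. The proviso that the Kloosterman sum on the left be well defined coincides with $\mathcal{M}_{w_{\ast}}(r)$ being nonempty, so both sides of the identity vanish simultaneously outside the admissible range.
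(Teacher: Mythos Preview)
Your plan takes a genuinely different route from the paper. Rather than parametrizing $X(C^{\ast} w_{\ast})$ directly by writing $g = u\, C^{\ast} w_{\ast}\, u'$ and solving the integrality constraints, the paper invokes the D{\k a}browski--Reeder machinery of Section~\ref{section:DR}. One fixes the reduced expression $w_{\ast} = s_{\alpha_1} \cdots s_{\alpha_{n-1}} s_{\alpha_n} s_{\alpha_{n-1}} \cdots s_{\alpha_1}$, which gives $\underline{\gamma} = (\alpha_{11}, \ldots, \alpha_{1n}, \alpha_{nn}, \ldots, \alpha_{2n})$, so that the index set in \eqref{mr} drops out automatically from \eqref{eq:Yldef}. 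The bijection $\underline{b}_{\underline{\beta}} : Y_{\lambda}/U_{w_{\ast}}(\Z_p) \to X(C^{\ast} w_{\ast})$ is then supplied for free by \cite{DR}, and the proof reduces to two explicit computations, each carried out exactly as for $w_l$: the Bruhat decomposition $b(\underline{a}) = LNR$ (the analogue of Lemma~\ref{lem:wl_Bruhat}), and an explicit fundamental domain for the $U_{w_{\ast}}(\Z_p)$-action on $Y_{\underline{\beta}}(\underline{m})$ (the analogue of Lemma~\ref{lem:wl_coset}). Reading off $\psi(L)\psi'(R)$ from the explicit entries of $L$ and $R$ then yields \eqref{partial1}.

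Your direct approach is in principle viable---it is essentially the Pl\"ucker-coordinate strategy the introduction alludes to---but the proposal is only a sketch at the points that carry the weight. The assertion that the integrality conditions ``decouple'' into independent first-row and last-column cascades, with the interior entries of $u$ absorbed into the left $U(\Z_p)$-coset, is exactly the content of the two lemmas above and is not automatic: the interior $u_{ij}$ interact with the boundary parameters through the non-corner entries of $g$, and one must verify both that these interactions impose no extra constraints on the $c_{ij}$ and that no hidden degrees of freedom survive. The D{\k a}browski--Reeder route sidesteps this difficulty because bijectivity of $\underline{b}_{\underline{\beta}}$ and the orbit count of Lemma~\ref{lem:repcount} are already established in general; one only needs to exhibit a candidate set of representatives of the correct cardinality and check pairwise inequivalence, rather than solve the full integrality system from scratch.
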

 
 \begin{cor}\label{cor5} There exists a $\delta = \delta_n> 0$ such that 
$${\rm Kl}_{p}(  \psi, \psi', C^{\ast}w_{\ast})  \ll \big(\max_{1 \leq j \leq n}  |\psi_j|^{-1/2}_p \big)   \Big(\prod_{1 \leq k \leq n} p^{r_k}\Big)^{1-\delta}.$$
\end{cor}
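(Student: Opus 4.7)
The proof proceeds in the spirit of Corollary \ref{cor3}, starting from the explicit formula of Theorem \ref{thm2}. The outer sum is of polynomial size: $|\mathcal{M}_{w_{\ast}}(r)| \le \prod_k (r_k+1)$ is polylogarithmic in $\prod_k p^{r_k}$ and can be absorbed into any positive power saving. It therefore suffices to bound each partial Kloosterman sum ${\rm Kl}_p(\underline{m}, \psi, \psi', w_{\ast})$ from \eqref{partial1} by $(\max_j |\psi_j|_p^{-1/2})(\prod_k p^{r_k})^{1-\delta}$.

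Inspecting the phase in \eqref{partial1} reveals a clean structure. The $2n-1$ summation variables split into two interacting chains
$$c_{11}, c_{12}, \ldots, c_{1n} \quad\text{and}\quad c_{2n}, c_{3n}, \ldots, c_{nn},$$
each of which carries a nested $\GL(2)$ Kloosterman structure: consecutive variables are coupled by pairs of the form $\alpha c_{i,j-1} \overline{c_{i,j}}/p^{m_{ij}}$, and the sole link between the two chains is the single endpoint term $\psi'_n c_{1n}\overline{c_{nn}}/p^{m_{1n}}$. This is a direct analogue of the nested structure exploited in the proof of Corollary \ref{cor3}.

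The plan is to extract cancellation by evaluating the sum one variable at a time, peeling off from the endpoints $c_{11}$ and $c_{2n}$ and working inward along each chain. After decomposing each $c_{ij}$ additively along its prime-power factorization (so as to separate the component appearing in the inverse from the component appearing linearly), the inner sum over a single variable reduces to a classical Kloosterman sum
$$\sum_{(c,p^a)=1} e\Big(\frac{\alpha c + \beta \overline{c}}{p^a}\Big),$$
where $\alpha,\beta$ are products of $\psi$-coefficients and of the previously fixed $c$'s (which are themselves units modulo the relevant prime power). The Weil--Salié bound gives square-root cancellation with a loss of at most $(\alpha,\beta,p^a)^{1/2}\tau(p^a)$. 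Iterating over the variables of both chains, the gcd losses are in the worst case of size $\max_j |\psi_j|_p^{-1/2}$, since each inner coefficient is an integer multiple of some $\psi_j$ times a unit; the multiplied square-root savings then yield a positive power of $\prod_k p^{r_k}$.

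The principal technical obstacle is the careful bookkeeping of moduli and coprimality conditions at each iteration: each $c_{ij}$ lives modulo a product of prime powers that typically exceeds the prime power appearing in the character, and the mixed-sign exponents such as $-m_{1(j-1)}+m_{1j}+m_{jn}$ force one to verify that each peeling step genuinely produces a non-degenerate Kloosterman sum. Once this is arranged, and the polylogarithmic losses from $|\mathcal{M}_{w_{\ast}}(r)|$ and from the divisor factors $\tau(p^a)$ are absorbed, the product of per-step savings yields a uniform $\delta_n>0$, which, as in Corollary \ref{cor3}, we do not attempt to optimize.
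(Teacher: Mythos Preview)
Your reduction is correct up to and including the observation that $|\mathcal{M}_{w_{\ast}}(r)|$ is polylogarithmic and that \eqref{partial1} is a chain of nested $\GL(2)$ Kloosterman sums. The gap is in the third paragraph: you cannot ``iterate'' the Weil--Sali\'e bound along the chain and multiply the resulting square-root savings. Once you apply Weil to, say, the $c_{11}$-sum and pass to absolute values, the phase coupling $c_{11}$ to $c_{12}$ (the term $\psi_2 c_{11}\overline{c_{12}}/p^{m_{12}}$) has been consumed; the remaining sum over $c_{12}$ no longer carries an oscillatory phase and you are forced to estimate it trivially. Thus only a \emph{single} application of Weil survives, and the question becomes which variable to spend it on. Your proposal does not address this, and the phrase ``multiplied square-root savings'' is precisely what fails.

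The paper's argument exploits this one degree of freedom carefully. One fixes a total order on the index set, namely
\[
(1,n) < (1,n-1) < \cdots < (1,1) < (2,n) < (3,n) < \cdots < (n,n),
\]
and for each partial sum ${\rm Kl}_p(\underline{m},\psi,\psi',w_{\ast})$ applies Weil to the \emph{single} variable $c_{ij}$ for which $m_{ij}$ is maximal, summing trivially over the rest. Writing $\mu_{ij} = \max_{(\alpha,\beta) < (i,j)} m_{\alpha\beta}$, one checks (by writing out each $c_{ij}$-sum explicitly as a classical Kloosterman sum, as done in Section~\ref{non-trivial}) that the resulting bound is
\[
\sum_{(\nu,\mu)\neq (i,j)} \Big| \sum_{c_{ij}} (\cdots) \Big| \ll \big(\max_k |\psi_k|_p^{-1/2}\big)\, p^{\,r_1+\cdots+r_n - m_{ij}/2 + O(\mu_{ij})}.
\]
Choosing $(i,j)$ to be the last index in the ordering with $m_{ij} = \max_{\alpha,\beta} m_{\alpha\beta}$ guarantees $\mu_{ij} < m_{ij}$, so the $O(\mu_{ij})$ loss is strictly dominated and one obtains a saving of $p^{-\delta_0 \max m_{ij}}$ for some $\delta_0 = \delta_0(n) > 0$. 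Since $\max_{i,j} m_{ij} \geq (r_1+\cdots+r_n)/(2n-1)$, this yields the claimed power saving. The point you are missing is not the bookkeeping of moduli, but the combinatorial choice of which single variable absorbs the Weil bound.
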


Our proof will show that one can take $\delta \gg 1/n$. This should be seen in light of the following exact evaluation. 

 \begin{cor}\label{cor6} For $C = (p, \ldots, p)$ and characters $\psi, \psi'$ with $p \nmid \psi_j\psi_j'$ for $1 \leq j \leq n$ we have ${\rm Kl}_{p}(  \psi, \psi', C^{\ast}w_*)  = p^{n-1} + p^{n-2}$. 
 \end{cor}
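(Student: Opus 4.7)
The plan is to apply Theorem~\ref{thm2} with $r = (1, 1, \ldots, 1)$ and evaluate each partial Kloosterman sum ${\rm Kl}_p(\underline{m}, \psi, \psi', w_\ast)$ explicitly. First I enumerate $\mathcal{M}_{w_\ast}(r)$: the constraints $\sum_{i\le k\le j} m_{ij} = 1$ force the pairs $(i,j)$ with $m_{ij}=1$ to correspond to disjoint intervals $[i,j]$ that partition $\{1,\ldots,n\}$ and satisfy $i=1$ or $j=n$. Since every such interval is anchored at $1$ or $n$, the partition has at most two blocks, yielding exactly $n$ configurations: the \emph{long} one with $m_{1n}=1$, and the $n-1$ \emph{two-block} ones with $m_{1,j_1}=m_{j_1+1,n}=1$ for $1\le j_1\le n-1$.

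For the long configuration, the definition of $\mathcal{C}_{w_\ast}(\underline{m})$ gives $c_{1j}\in\mathbb{Z}/p\mathbb{Z}$ freely for $j<n$, $c_{1n}\in(\mathbb{Z}/p\mathbb{Z})^\ast$, and all $c_{in}$ with $i\ge 2$ trivial. Substituting into \eqref{partial1} and using $\overline{c_{ij}}=0$ whenever $m_{ij}=0$, the only surviving contributions to the exponent come from the $j=n$ piece of the main sum and the second half of the fifth term, producing $e\big(c_{1,n-1}(\psi_n\overline{c_{1n}}+\psi_n')/p\big)$. The variables $c_{11},\ldots,c_{1,n-2}$ do not appear and contribute a factor $p^{n-2}$; summation over $c_{1,n-1}\in\mathbb{Z}/p\mathbb{Z}$ then forces $c_{1n}\equiv-\psi_n/\psi_n'\pmod{p}$ (possible since $p\nmid\psi_n\psi_n'$), contributing a further factor $p$. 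This configuration thus contributes $p^{n-1}$.

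For the two-block configurations, the key point is that the $j=j_1+1$ summand of the main sum in \eqref{partial1} has denominator exponent $-m_{1,j_1}+m_{1,j_1+1}+m_{j_1+1,n}=-1+0+1=0$, so it is an integer and contributes $1$ to $e$; most remaining terms trivialize via the $\overline{c_{ij}}=0$ convention or because the relevant $c_{ij}$ has modulus $1$. A careful substitution shows that for $j_1=1$ and $n\ge 3$ the remaining non-integer exponent is $(\psi_1\overline{c_{11}}+\psi_1' c_{2n})/p$; the two Ramanujan sums over $c_{11},c_{2n}\in(\mathbb{Z}/p\mathbb{Z})^\ast$ each evaluate to $-1$, and the $n-2$ free variables $c_{in}$ ($3\le i\le n$) contribute $p^{n-2}$, yielding a total of $p^{n-2}$. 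For $2\le j_1\le n-1$ (with $n\ge 3$) the remaining exponent reduces to $\psi_{j_1}c_{1,j_1-1}\overline{c_{1,j_1}}/p$, and since $c_{1,j_1-1}$ ranges freely over $\mathbb{Z}/p\mathbb{Z}$ while $p\nmid\psi_{j_1}\overline{c_{1,j_1}}$, the inner sum annihilates and the contribution is zero. For $n=2$ the unique two-block configuration $j_1=1=n-1$ is evaluated directly and yields a product of two Ramanujan sums equal to $1=p^{n-2}$. Summing $p^{n-1}$ from the long configuration and $p^{n-2}$ from the $j_1=1$ two-block configuration gives the claimed value $p^{n-1}+p^{n-2}$.

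The main obstacle is purely bookkeeping: for each $\underline{m}$ one must carefully determine the domains of the individual $c_{ij}$ (free in $\mathbb{Z}/p\mathbb{Z}$, in $(\mathbb{Z}/p\mathbb{Z})^\ast$, or trivial) and identify which $p$-powers in the denominators of \eqref{partial1} actually appear as nontrivial denominators versus as $p^0=1$. Once this is done the partial sums reduce to elementary Ramanujan-type sums.
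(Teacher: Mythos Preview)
Your proof is correct and follows essentially the same approach as the paper: enumerate $\mathcal{M}_{w_\ast}(r)$ as the long configuration $m_{1n}=1$ plus the $n-1$ two-block configurations $m_{1k}=m_{k+1,n}=1$, then evaluate each partial sum directly from \eqref{partial1}, obtaining $p^{n-1}$ from the long block, $p^{n-2}$ from $k=1$ via two Ramanujan sums, and $0$ from $2\le k\le n-1$ via the free $c_{1,k-1}$-sum. Your write-up is in fact slightly more detailed than the paper's (you make explicit why the $j=j_1+1$ term has denominator $p^0$ and you treat $n=2$ separately), but the argument is the same.
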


This shows that Kloosterman sums can be fairly large. In particular, up to the constant the saving of size $\delta_n \asymp 1/n$ in Corollary \ref{cor5} is asymptotically  best possible. Corollary \ref{cor6} is a variation of \cite[Theorem 3]{Bl}, but proved by a completely different argument, since \cite[Theorem 3]{Bl} is special to the congruence subgroup $\Gamma_0(p)$. A similar, but more involved computation shows for instance
$${\rm Kl}_{p}(\psi, \psi', C^{\ast}w_l)  = (-1)^n (p+1)$$
for $C = (p, \ldots, p)$ and characters $\psi, \psi'$ with $p \nmid \psi_j\psi_j'$ for $1 \leq j \leq n$ (generalizing \cite[Property 4.10]{BFG} for $n=2$).

\subsection{An application}\label{appl} Kloosterman sums come up most prominently in the Kuznetsov formula. Up until now, in higher rank one could at best employ the trivial bound. With non-trivial bounds at hand, we can now refine the main result of \cite{AB}.   

The Ramanujan conjecture for the group ${\rm GL}(n)$ states that cuspidal automorphic representations  are tempered. This is way out of reach, but one may hope to prove that it holds in reasonable families   in the following quantitative average sense: automorphic forms that are ``far away'' from being tempered should occur ``rarely''. With this in mind, let $\mathcal{F}  =\mathcal{F}_{\Gamma(q)}(M)$ be the finite family of cusp forms $\varpi$ (eigenforms for the unramified Hecke algebra) for the principal congruence subgroup $\Gamma(q) \subseteq {\rm SL}_n(\Bbb{Z})$ with bounded spectral parameter $\| \mu_{\varpi, \infty}\|\leq M$ (see \cite{AB} for more details). For a place $v$ of $\Bbb{Q}$ and $\sigma \geq 0$ define
$$\mathcal{N}_v(\sigma, \mathcal{F}) = \#\{ \varpi \in \mathcal{F} \mid  \sigma_{\varpi,v} \geq \sigma\}$$
where
$\sigma_{\varpi,v} =  \max_j |\Re \mu_{\varpi,v}(j)|$. The trivial bound is $N_v(\sigma, \mathcal{F}) \leq N_v(0, \mathcal{F}) = \#\mathcal{F}$. On the other hand, we have $\sigma_{\text{triv}, v} =  \frac{1}{2}(n-1)$. One might hope that a trace formula can interpolate linearly between these two bounds: 
$$\mathcal{N}_v(\sigma, \mathcal{F}) \ll_{v, \varepsilon, n} M^{O(1)}  [{\rm SL}_n(\Bbb{Z}) : \Gamma(q)]^{1 - \frac{2\sigma}{n-1} + \varepsilon}$$
where $M^{O(1)}[{\rm SL}_n(\Bbb{Z}) : \Gamma(q)]$ should be thought of as a proxy for $\#\mathcal{F}$. 
This is a version of Sarnak's density conjecture \cite{Sar} and was proved in \cite{AB} for squarefree $q$. Of course, the trivial representation is not cuspidal, and so one may hope to do even better, but even for $n=2$ this is a very hard problem when the Selberg trace formula is employed. On the other hand, the Kuznetsov formula is better suited in this situation, since the residual spectrum is a priori excluded. We use this observation to go \emph{beyond} Sarnak's density conjecture in the scenario described above for prime moduli (the primality assumption is used before \eqref{new}).

\begin{prop}\label{prop1}  Let $M > 0$, $n \geq 5$.  Let $q$ be a large prime and let $\mathcal{F}=\mathcal{F}_{\Gamma(q)}(M)$ be the set of cuspidal automorphic forms for $\Gamma(q) \subseteq {\rm SL}_n(\Bbb{Z})$ with archimedean spectral parameter $\| \mu\| \leq M$. Fix a place $v$ of $\Bbb{Q}$. 
There exist constants $K, \delta$ depending only on $n$, such that 
$$\mathcal{N}_v(\sigma, \mathcal{F}) \ll_{v, n} M^K  [{\rm SL}_n(\Bbb{Z}) : \Gamma(q)]^{1 - \frac{(2+\delta)\sigma}{n-1}}$$
for $\sigma \geq 0$. 
\end{prop}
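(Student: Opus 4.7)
The strategy is to replace the Arthur--Selberg trace formula used in \cite{AB} by the Kuznetsov formula for the principal congruence subgroup $\Gamma(q)$. This substitution is decisive for two independent reasons. First, on the spectral side the Kuznetsov formula sees only the cuspidal and continuous Eisenstein spectrum, so the trivial representation and more generally the residual spectrum that saturate Sarnak's bound at $\sigma = (n-1)/2$ are automatically excluded. Second, the geometric side is literally a sum of the Kloosterman sums estimated in Theorems \ref{thm1} and \ref{thm2}, so the power-saving bounds of Corollaries \ref{cor3} and \ref{cor5} feed directly into the computation, producing the negative exponent $-\delta$ in place of the $+\varepsilon$ of \cite{AB}.

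First I would choose a test function $f$ that amplifies representations with $\sigma_{\varpi,v}\geq \sigma$, exactly as in \cite{AB}: at a finite place this is a short polynomial in the Hecke operators at $v$, and at the archimedean place a Paley--Wiener function whose spherical transform is nonnegative on the unitary dual and bounded below by $\gg M^{-K}$ on the region $\|\mu\|\leq M$. Inserted into the Kuznetsov formula, the spectral side is bounded from below, up to a polynomial factor in $M$ and an Eisenstein error, by an amplification factor (a fixed positive power of $q$ in the variable $\sigma$) times $\mathcal{N}(\sigma, \mathcal{F})$.

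Next I would estimate the geometric side, which decomposes as a sum over the $2^n$ admissible Weyl elements $w$ in the sense of \cite[Prop.~1.3]{Fr} and over compatible moduli $C = (q^{r_1}, \ldots, q^{r_n})$, of Bessel transforms of $f$ weighted by ${\rm Kl}_q(\psi, \psi', C^{\ast} w)$. For $w = w_l$, Corollary \ref{cor3} yields a power saving $\delta_n$ over the D\k{a}browski--Reeder trivial bound; for $w = w_\ast$, Corollary \ref{cor5} does the same. For the intermediate Weyl elements, the compatibility constraints of \cite[Prop.~1.3]{Fr} applied at the single prime $q$ force many of the $r_k$ to vanish or to lie in arithmetic progressions, so that the total mass of admissible moduli shrinks enough for the trivial bound of \cite{DR} to already suffice. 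Balancing the resulting geometric upper bound against the spectral lower bound and optimizing the amplifier then yields the exponent $1 - 2\sigma/(n-1) - \delta$ as claimed.

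The main obstacle I expect is controlling the archimedean analysis: the Bessel kernels of the ${\rm GL}(n+1)$ Kuznetsov formula are highly oscillatory integrals over the positive Weyl chamber, and obtaining uniform bounds on them that interact correctly with the $p$-adic saving $\delta_n$ is delicate. A secondary obstacle is the Eisenstein contribution, which is typically handled by induction on rank using Kuznetsov formulas on proper Levi subgroups. The hypothesis $n\geq 5$ most likely enters precisely to ensure that the inductive Eisenstein error, together with the polynomial loss in $M$, does not swamp $\delta_n$; once these technical points are in place, the quantitative improvement over \cite{AB} is essentially the \emph{same} $\delta_n$ provided by Corollaries \ref{cor3} and \ref{cor5}.
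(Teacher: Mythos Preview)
Your proposal rests on a misreading of \cite{AB}: that paper already uses the Kuznetsov formula, not the Arthur--Selberg trace formula. Consequently the spectral side, the archimedean test function, the Bessel-kernel analysis, and the Eisenstein induction are all in place and need no modification. The paper's proof is therefore not a new trace-formula argument but a surgical revision of two specific estimates in \cite{AB}.

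The actual mechanism is quite different from what you outline. In the setup of \cite{AB} for $\Gamma(q)$ with $q$ prime, the support conditions on the test function (cf.\ Remarks~2 after \cite[Lemma~4.1]{AB}) force every Weyl element other than the identity and $w_\ast$ to drop out; the long element $w_l$ does not contribute, so Corollary~\ref{cor3} plays no role here. The two modifications are: (i) a sharpening of the combinatorial count \cite[Lemma~4.2]{AB} in the case $(\alpha,\beta)=(1,0)$, extracting an extra saving of $p^{n-3}$ in the variable $y'_{1n}$ from congruence~(4.19), which yields $C_{1,0}\leq \mathcal{N}_q q^{(n-1)^2}q^{n+2}$; and (ii) inserting Corollary~\ref{cor5} in place of the trivial Kloosterman bound to upgrade \cite[Theorem~4.3]{AB}. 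The hypothesis $n\geq 5$ has nothing to do with Eisenstein errors: it is precisely the inequality $n+2\leq \tfrac{7}{4}(n-1)$ (so exponent strictly below $2$), which is what allows the relaxed condition $T\leq M^{-K}q^{n+1+\delta_0}$ in \cite[(5.3)]{AB} to be absorbed.

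In short, your ``main obstacles'' (archimedean Bessel bounds, Eisenstein contribution) are non-issues, being inherited verbatim from \cite{AB}, while the genuine technical step---the refined $C_{1,0}$ count and the restriction to $w_\ast$---is absent from your outline.
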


\section{The stratification of D{\k a}browski and Reeder}\label{section:DR}

Here we convert results in \cite{DR} to conform with our convention. Although nothing in this section is new, it might be convenient for the reader to compile some results. 

Let $G$ be a simply connected Chevalley group over $\Q_p$ with Lie algebra $\mf g$. Let $W$ be the Weyl group of $G$, and let $\Phi$ and $\Phi^+$ denote the set of roots and the set of positive roots of $\mf g$ respectively. We fix a set of simple roots $\Delta$ of $\mf g$. For each positive root $\beta \in \Phi^+$, there is a natural homomorphism $\phi_\beta: \SL(2) \to G$. For $t\in \Q_p$ we write
\begin{align*}
x_\beta(t) &= \phi_\beta\bp 1 & t\\ & 1\ep, & x_{-\beta}(t) &= \phi_\beta\bp 1\\ t & 1\ep.
\end{align*}
Through the canonical bijection $\beta \longleftrightarrow \check\beta$ between the set of roots $\Phi$ and the set of coroots $\check\Phi$, we have
$$\check\beta(c) = \phi_\beta\bp c \\ & c^{-1} \ep$$
for $c\in\Q_p^\times$. For $m\in\Z$, we write $m\check\beta$ for $\check\beta(p^m)$. This induces a natural embedding
$$ \check X := \Hom(\G_m, T) \hra T$$
from the set of cocharacters of $T$ into the maximal torus $T$. If $\lambda = -\sum_{\beta\in\Delta} r_\beta \check \beta \in \check X$ for $r_\beta \in \N_0$, we define the height of $\lambda$ to be
\begin{equation}\label{height}
 \height(\lambda) := \sum_{\beta\in\Delta} r_\beta.
 \end{equation}
For $w\in W$, we write 
$$R(w) := \cbm{\beta\in\Phi^+}{w\beta \in -\Phi^+}.$$
Then, if $w = s_{\beta_1}\ldots s_{\beta_l}$ is a reduced expression of $w$ as a product of simple reflections, then
\begin{equation}\label{eq:gdef}
R(w^{-1}) = \cbm{\gamma_j := s_{\beta_1}\ldots s_{\beta_{j-1}} \beta_j}{j = 1,\ldots, l}. 
\end{equation}

For $a\in\Q_p$ we set 
\begin{align*}
\mu(a) &:= \max\cb{0, -v_p(a)}, & a^* &:= \begin{cases} 0 & \text{ if } a \in \Z_p,\\ p^{-2v_p(a)} a^{-1} & \text{ if } a\not\in \Z_p.\end{cases}
\end{align*}
In particular, if $a \not \in \Z_p$, say $a = cp^{-m}$ for $m\ge 1$, $\varepsilon \in \Z_p^\times$, then we have $a^* = c^{-1} p^{-m}$. This means the map $a \mapsto a^*$ preserves $\mu$. For $\beta\in\Delta$ we define
\begin{equation}\label{eq:bbadef}
b_\beta(a) := x_\beta(a^*) (-\mu(a)\check\beta) \ol{s_\beta} x_\beta(a),
\end{equation}
where $\ol{s_\beta}$ is some fixed representative of the simple reflection $s_\beta$ in $N = N_G(T)$. When no confusion can arise, we simply write $s_\beta$ for $\ol{s_\beta}$. We observe that $b_\beta(a) \in G(\Z_p)$ for all $a\in\Q_p$. Indeed, we have
\begin{align*}
b_\beta(a) &= \phi_\beta\bp & -1\\ 1 & a\ep \text{ for } a \in \Z_p, & b_\beta(a) &= \phi_\beta\bp c^{-1} & \\ p^m & c \ep \text{ for } a = cp^{-m} \not\in \Z_p.
\end{align*}

Let $l \in \N$,  let $w = s_{\beta_1}\ldots s_{\beta_l}$ be a reduced representation of $w$ as a product of simple reflections, and write $\ul\beta = (\beta_1,\ldots, \beta_l)$. For $\ul a = (a_1,\ldots, a_l) \in \Q_p^l$, let $b(\ul a) := b_{\ul\beta}(\ul a)$ denote the image of $b_{\beta_1}(a_1) \ldots b_{\beta_l}(a_l)$ in $U(\Z_p)\bs G(\Q_p)$. By \cite[Proposition 2.1]{DR}, the map $b_{\ul\beta}:\Q_p^l \to U(\Z_p)\bs G(\Q_p)$ is injective, and its image is contained in $U(\Z_p) \bs \rb{BwU \cap G(\Z_p)}$, where $B = TU$ is the standard Borel subgroup of $G$. 

We remark that the map $b_{\ul \beta}$ really depends on the choice of the reduced representation of $w$. Moreover, $b_{\ul \beta}$ also depends on the choice of the representative $\ol{s_\beta}$ of $s_\beta$. If we fix a representative of $w$ in $G(\Z_p)$, then by convention we choose the representatives such that the toral part of the Bruhat decomposition of $b_{\ul \beta}(\ul a)$ has positive entries. 

For $\ul m = (m_1,\ldots, m_l) \in \N_0^l$, we set
$$Y_{\ul\beta} (\ul m) = \cbm{\ul a = (a_1,\ldots, a_l)\in\Q_p^l}{\mu(a_i) = m_i, \; i = 1,\ldots, l}.$$

Let $\beta\in\Delta$, and 
$$u = \prod_{\beta\in\Phi^+} x_\beta (a_\beta) \in U(\Q_p).$$
The \emph{$\beta$-coordinate function} $f_\beta$ is defined by $f_\beta(u) := a_\beta$. Note that this does not depend on the order of the product. For $a \in \Q_p$ and $u\in U(\Q_p)$, we define
$$R_\beta^a(u) := b_\beta(a) u b_\beta(a+f_\beta(u))^{-1}.$$
Then $R_\beta^a(U(\Z_p)) \sbe U(\Z_p)$. For a sequence $\ul\beta = (\beta_1,\ldots, \beta_l)$ of simple roots, we can construct a right action $* = *_{\ul\beta}: \Q_p^l \times U(\Z_p) \to \Q_p^l$, $\ul a \mapsto \ul a * u =: \ul a'$ as follows:
\begin{align*}
a'_l & a_l + f_{\beta_l} (u),\\
a'_j &= a_j + f_{\beta_j} \rb{R_{\beta_{j+1}}^{a_{j+1}}R_{\beta_{j+2}}^{a_{j+2}}\ldots R_{\beta_l}^{a_l}(u)}, & &1\le j \le l-1.
\end{align*}
Then $b_{\ul\beta}: \Q_p^l \to U(\Z_p)\bs G(\Q_p)$ is right $U(\Z_p)$-equivariant with respect to $*_{\ul\beta}$, that is, we have $b_{\ul\beta}(\ul a *_{\ul\beta} u) = b_{\ul\beta}(\ul a)u$ for $u\in U(\Z_p)$ and $\ul a \in \Q_p^l$. It follows that $b_{\ul\beta}$ induces a map
$$ \ul b_{\ul\beta}: \Q_p^l / U_w(\Z_p) \to U(\Z_p) \bs \rb{BwB \cap G(\Z_p)} / U_w(\Z_p),$$
where $\Q_p^l/U_w(\Z_p)$ denotes the set of $U_w(\Z_p)$-orbits with respect to the right action $*_{\ul\beta}$. 

For any cocharacter $\lambda \in \check X$, we define
\begin{equation*}
Y_\lambda = \coprod \Big\{Y_{\ul\beta} (\ul m) \mid  m \in \N_0^l, \lambda = -\sum_{j=1}^l m_j \check \gamma_j\Big\},
\end{equation*}
where $\gamma_j$ is defined in \eqref{eq:gdef}. Now we are able to state the main result in this section.
\begin{lemma}[{\cite[Proposition 3.3]{DR}}]
Let $\lambda\in\check X$, $n = \lambda w \in N(\Q_p)$, and let $w = s_{\beta_1}\ldots s_{\beta_l}$ be a reduced representation of $w$ as a product of simple reflections. Write $\ul\beta = (\beta_1,\ldots,\beta_l)$. Then $\ul b_{\ul\beta}$ gives a bijection between $Y_\lambda / U_w(\Z_p)$ and the Kloosterman set $X(n)$.
\end{lemma}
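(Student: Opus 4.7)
The plan is to prove the claimed bijection in three stages: (i) verify that $b_{\ul\beta}$ sends $Y_\lambda$ into (the preimage of) the Kloosterman set $X(n)$ for $n = \lambda w$; (ii) show surjectivity of the induced map onto $X(n)$; (iii) use the already-established injectivity of $b_{\ul\beta}: \Q_p^l \to U(\Z_p)\bs G(\Q_p)$ together with the $U_w(\Z_p)$-equivariance $b_{\ul\beta}(\ul a *_{\ul\beta} u) = b_{\ul\beta}(\ul a) u$ to deduce injectivity on orbits.

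For step (i), I would compute the Bruhat decomposition of $b_{\ul\beta}(\ul a)$ for $\ul a \in Y_{\ul\beta}(\ul m)$. Expanding the product
\[
b_{\ul\beta}(\ul a) = \prod_{j=1}^l x_{\beta_j}(a_j^*)\,(-\mu(a_j)\check\beta_j)\,s_{\beta_j}\, x_{\beta_j}(a_j),
\]
one pushes every $s_{\beta_j}$ to the right using $s_\beta x_\gamma(t) s_\beta^{-1} = x_{s_\beta\gamma}(\pm t)$ and $s_\beta\check\gamma s_\beta^{-1} = \check{s_\beta\gamma}$, and every toral element to the left, absorbing the intermediate $x_\gamma$'s into the maximal unipotent on the left and right. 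Since $s_{\beta_1}\cdots s_{\beta_l} = w$, the Weyl component at the end is exactly $w$, and the torus contribution is
\[
-\sum_{j=1}^l \mu(a_j)\, s_{\beta_1}\cdots s_{\beta_{j-1}}\check\beta_j \;=\; -\sum_{j=1}^l m_j \check{\gamma_j} \;=\; \lambda,
\]
by the definition of $\gamma_j$ in \eqref{eq:gdef} and of $Y_\lambda$ in \eqref{eq:Yldef}. Combined with the fact that each $b_{\beta_j}(a_j) \in G(\Z_p)$, this places $b_{\ul\beta}(\ul a)$ in $U(\Q_p)\lambda w U(\Q_p) \cap G(\Z_p) = C(n)$.

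For step (ii), I would argue by induction on the length $l$ of $w$ along the reduced expression $w = s_{\beta_1}\cdots s_{\beta_l}$. Given $g \in C(\lambda w) \cap G(\Z_p)$, one peels off a rightmost factor of the form $b_{\beta_l}(a_l) u_l$ with $u_l \in U_w(\Z_p)$: the Bruhat decomposition $g = u\,\lambda w\,u'$ with $u' \in \bar U_w(\Q_p)$, combined with the explicit form of $b_{\beta_l}(a_l) = \phi_{\beta_l}\!\smash{\bp c^{-1} & \\ p^{m_l} & c\ep}$ or $\phi_{\beta_l}\!\smash{\bp & -1\\ 1 & a_l\ep}$, forces $m_l = \mu(a_l)$ to match the coefficient of $\check\gamma_l$ in $-\lambda$; what remains is an element of $C(\lambda' w') \cap G(\Z_p)$ for the shorter word $w' = s_{\beta_1}\cdots s_{\beta_{l-1}}$ and the modified cocharacter $\lambda' = \lambda + m_l \check\gamma_l$, to which the inductive hypothesis applies. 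The possibility of performing this extraction modulo $U_w(\Z_p)$ on the right is exactly what the right action $*_{\ul\beta}$ and the $R_\beta^a$ operators are set up to encode.

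For step (iii), the injectivity of $b_{\ul\beta}$ (Proposition 2.1 of \cite{DR}) says that distinct $\ul a \in \Q_p^l$ give distinct classes in $U(\Z_p) \bs G(\Q_p)$. Combined with the equivariance $b_{\ul\beta}(\ul a *_{\ul\beta} u) = b_{\ul\beta}(\ul a) u$, two tuples $\ul a, \ul a'$ yield the same class in $X(n) = U(\Z_p)\bs C(n)/U_w(\Z_p)$ iff they lie in the same $U_w(\Z_p)$-orbit under $*_{\ul\beta}$, proving the bijection $Y_\lambda / U_w(\Z_p) \xrightarrow{\sim} X(n)$.

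The main obstacle is step (ii): one must show that the peeling-off procedure, interpreted modulo $U(\Z_p)$ on the left and $U_w(\Z_p)$ on the right, can always be carried out and stays within the stratum $Y_\lambda$ with the correct valuation data. This amounts to a careful analysis of how $\bar U_w$ interacts with the rightmost $\SL(2)$-factor $\phi_{\beta_l}$, making use of the Iwahori-type decomposition of $\SL(2, \Q_p)$ together with the root-group commutation relations to absorb all off-cell factors into $U(\Z_p)$ and $U_w(\Z_p)$. The combinatorics of heights and the identity $\height(\lambda) = \sum_j m_j$ guarantee that the induction terminates correctly.
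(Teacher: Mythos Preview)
The paper does not prove this lemma; it is quoted verbatim as \cite[Proposition~3.3]{DR} and used as a black box. There is therefore no ``paper's own proof'' to compare against.

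That said, your outline is a faithful sketch of the argument in \cite{DR}. Step~(i) is correct as written: pushing the $s_{\beta_j}$'s to the right and the toral pieces to the left yields Weyl part $w$ and torus part $-\sum_j m_j\check\gamma_j = \lambda$, and integrality follows since each $b_{\beta_j}(a_j)\in G(\Z_p)$. Step~(iii) is immediate from the cited injectivity of $b_{\ul\beta}$ and the $*_{\ul\beta}$-equivariance. The only real work is step~(ii), and your peeling-off induction on $l$ is exactly the mechanism used in \cite{DR}; the point you flag as the ``main obstacle''---that the rightmost $\SL(2)$-factor can always be extracted with the correct valuation $m_l$ while keeping the residue in $C(\lambda' w')\cap G(\Z_p)$---is handled there by the explicit $2\times 2$ dichotomy for $b_\beta(a)$ together with the root-group commutation relations, just as you indicate. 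Your sketch would need those computations made precise (in particular, checking that the leftover unipotent pieces land in $U(\Z_p)$ on the left and $U_w(\Z_p)$ on the right at each step), but the strategy is sound and matches the source.
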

The following proposition gives the trivial bound for Kloosterman sums.
\begin{lemma}[{\cite[Proposition 3.4]{DR}}]\label{lem:repcount}
Assume the settings above. For $\ul m \in \N_0^l$ we have
$$\#\rb{Y_{\ul\beta}(\ul m)/U_w(\Z_p)} = p^{\height(\lambda)} \rb{1-p^{-1}}^{\kappa(\ul m)},$$
where $\kappa(\ul m)$ is the number of nonzero entries in $\ul m$ and $\height(\lambda)$ was defined in \eqref{height}. 
\end{lemma}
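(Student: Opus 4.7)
The statement is a counting lemma for $U_w(\Z_p)$-orbits on the stratum $Y_{\ul\beta}(\ul m)$, and the natural approach is induction on $l = \ell(w)$, exploiting the lower-triangular structure of the $*_{\ul\beta}$-action: the update of $a_l$ depends only on $u \in U_w(\Z_p)$, while the update of $a_j$ for $j < l$ depends additionally on $a_{j+1},\ldots,a_l$.

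For the base case $l = 1$, $w = s_\beta$ is a simple reflection, $U_w(\Z_p) = U_\beta(\Z_p) \cong \Z_p$, and the action on $Y_{(\beta)}(m) = \{a \in \Q_p : \mu(a) = m\}$ is the simple shift $a \mapsto a+u$. For $m = 0$ this is transitive on $\Z_p$, yielding one orbit. For $m \geq 1$, writing $a = c p^{-m}$ with $c \in \Z_p^\times$, orbits are parametrized by $c \bmod p^m$, giving $\phi(p^m) = p^m(1 - p^{-1})$ orbits. Since $\lambda = -m\check\beta$ has $\height(\lambda) = m$, this matches the claim.

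For the inductive step, peel off $a_l$: since the word $w = s_{\beta_1}\cdots s_{\beta_l}$ is reduced, $\beta_l \in R(w)$, so $U_{\beta_l}(\Z_p) \subseteq U_w(\Z_p)$, and therefore the map $u \mapsto f_{\beta_l}(u)$ surjects onto $\Z_p$. Hence by the base-case analysis the $a_l$-coordinate contributes $p^{m_l}(1 - p^{-1})^{[m_l \geq 1]}$ orbits. After fixing such a representative, the equivariance $b_{\ul\beta}(\ul a *_{\ul\beta} u) = b_{\ul\beta}(\ul a)u$, combined with conjugating $b_{\beta_l}(a_l)$ past the remaining coordinates, transports the residual problem to one associated with the shorter reduced expression $w' = s_{\beta_1}\cdots s_{\beta_{l-1}}$ and a modified cocharacter $\lambda' = \lambda + m_l \check\gamma_l$.

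The main obstacle is the height bookkeeping. Writing $h_l$ for the height of $\check\gamma_l$ (the coefficient sum in its standard expression as a combination of simple coroots), one has $\height(\lambda) = \height(\lambda') + m_l h_l$, so the naive inductive product $p^{m_l} \cdot p^{\height(\lambda')}(1-p^{-1})^{\kappa(\ul m) - [m_l \geq 1]}$ falls short of the claimed $p^{\height(\lambda)}(1-p^{-1})^{\kappa(\ul m)}$ by a factor of $p^{m_l(h_l - 1)}$ whenever $\gamma_l$ is a non-simple positive root. This discrepancy must arise from the residual $\Stab_{U_w(\Z_p)}(a_l)$-action on $(a_1, \ldots, a_{l-1})$ being strictly coarser than the $U_{w'}(\Z_p)$-action for the shorter expression: when $a_l$ has negative valuation $-m_l$, conjugation by $b_{\beta_l}(a_l)$ introduces a toral cocycle proportional to $-m_l \check\beta_l$ that rescales the accessible shifts on the other $a_j$'s by additional powers of $p$, and one must verify via the Chevalley commutator relations in $U_w$ that these rescalings produce exactly the missing $p^{m_l(h_l - 1)}$ orbits. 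Carrying out this accounting is the technical heart of the argument; the orbit-counting around it is routine.
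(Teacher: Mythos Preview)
The paper does not prove this lemma at all: it is simply quoted from \cite[Proposition 3.4]{DR}, so there is no in-paper argument to compare your proposal against.

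As for your sketch on its own merits: the inductive set-up is sound (in particular, $\beta_l \in R(w)$ because $w = w's_{\beta_l}$ is reduced, so $U_{\beta_l}(\Z_p)\subseteq U_w(\Z_p)$ and the $a_l$-coordinate indeed contributes $p^{m_l}(1-p^{-1})^{[m_l\geq 1]}$ orbit classes). But you explicitly defer the one non-routine step --- showing that the residual action of $\Stab_{U_w(\Z_p)}(a_l)$ on $(a_1,\ldots,a_{l-1})$ produces exactly $p^{m_l(h_l-1)}$ times as many orbits as the $U_{w'}(\Z_p)$-action for the shorter word. This is the entire content of the lemma; calling it ``the technical heart'' and then stopping is a gap, not a proof. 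Concretely, the stabilizer $\{u\in U_w(\Z_p): f_{\beta_l}(u)=0\}$ acts through $R^{a_l}_{\beta_l}$, and conjugation by $b_{\beta_l}(a_l)$ involves both the toral piece $-m_l\check\beta_l$ and the Weyl reflection $s_{\beta_l}$; the rescalings on root subgroups $U_\alpha$ are by $p^{-m_l\langle\alpha,\check\beta_l\rangle}$ and the reflection permutes the roots in $R(w)\setminus\{\beta_l\}$ onto $R(w')$. Verifying that the net effect is a factor of exactly $p^{m_l(h_l-1)}$ requires tracking these pairings over $R(w')$ and is not automatic --- for non-simply-laced $G$ the relationship between root heights and coroot heights is already delicate. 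If you want a self-contained argument, this bookkeeping must actually be carried out (or you can, as the paper does, simply cite D{\k a}browski--Reeder).
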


\section{Proof of Theorem \ref{thm1}}

We apply the results from Section \ref{section:DR} to our case. Let $G = \GL(n+1)$. A set of simple roots of $G$ is given by $\Delta = \cb{\alpha_1,\ldots,\alpha_n}$, where in usual notation $\alpha_i := e_i - e_{i+1}$. Using this root basis, the set of positive roots of $G$ is given by
$$\Phi^+ = \cbm{\alpha_{ij} := \alpha_i+\alpha_{i+1}+\ldots+\alpha_j}{1\le i \le j \le n}.$$
Throughout this section, we use the following reduced representation of $w_l$:
\begin{equation}\label{eq:wl_rrep}
w_l = (s_{\alpha_1}\ldots s_{\alpha_n})(s_{\alpha_1}\ldots s_{\alpha_{n-1}})\ldots (s_{\alpha_1}s_{\alpha_2}) s_{\alpha_1}.
\end{equation}
Recall the definition of $\gamma_j$ in \eqref{eq:gdef}. For the reduced representation \eqref{eq:wl_rrep} of $w_l$, we have
$$\ul\gamma = \rb{\alpha_{11},\alpha_{12},\ldots,\alpha_{1n},\alpha_{22},\alpha_{2n},\ldots,\alpha_{nn}}.$$

Now we give a characterisation for $b(\ul a)$, for $\ul a = (a_{11},\ldots, a_{1n}, a_{22},\ldots,a_{nn}) \in \Q_p^{ n(n+1)/2}$. Note that every $a_{ij}\in\Q_p$ can be written uniquely as   $a_{ij} = c_{ij} p^{-m_{ij}}$ with $m_{ij} \ge 0$, $c_{ij}\in\Z_p$, and $(c_{ij},p^{m_{ij}}) = 1$. 

\begin{lemma}\label{lem:wl_Bruhat}
Let $\ul a = (a_{11},\ldots,a_{nn}) \in \Q_p^{n(n+1)/2}$. Write $a_{ij} = c_{ij} p^{-m_{ij}}$, with $m_{ij} \ge 0$, $c_{ij}\in\Z_p$, and $(c_{ij},p^{m_{ij}}) = 1$. Then $b(\ul a)$ has a Bruhat decomposition $b(\ul a) = LNR$, where
$$L = \left(\begin{smallmatrix} 1 &L_{12} & L_{13} & \cdots & L_{1(n+1)}\\ & 1 & L_{23} & \cdots & L_{2(n+1)}\\   && \ddots&& \vdots\\ &&&1 & L_{n(n+1)}\\ &&&& 1\end{smallmatrix}\right), \quad N = \left(\begin{smallmatrix} && N_{1(n+1)}\\ &\Ddots\\ N_{(n+1)1} \end{smallmatrix}\right), \quad R = \left(\begin{smallmatrix} 1 &R_{12} & R_{13} & \cdots & R_{1(n+1)}\\ & 1 & R_{23} & \cdots & R_{2(n+1)}\\   && \ddots&& \vdots\\ &&&1 & R_{n(n+1)}\\ &&&& 1\end{smallmatrix}\right),$$
with
$$N_{i(n+2-i)} = (-1)^{(n+1-i)} \frac{\prod_{k=1}^{i-1} p^{m_{k(i-1)}}}{\prod_{k=1}^n p^{m_{ik}}}, \quad (1\le i \le n+1),$$
$$L_{ij} = \sum_{1\le \delta_1<\ldots < \delta_{j-i} \le j-1} L_{ij}\rb{\delta_1,\ldots,\delta_{j-i}}, \quad (1\le i < j \le n+1),$$
$$L_{ij}\rb{\delta_1,\ldots,\delta_{j-i}} = \frac{\prod_{k=1}^{j-i} c_{\delta_k(i-2+k)} \prod_{k=1}^{j-i} \prod_{t=\delta_{k-1}+1}^{\delta_k-1} p^{m_{t(i-2+k)}}}{\prod_{k=1}^{j-i} c_{\delta_k(i-1+k)} \prod_{k=1}^{\delta_{j-i}} p^{m_{k(j-1)}}},$$
$$R_{ij} = \sum_{n+1-i \le \partial_1 \le \ldots \le \partial_{j-i} \le n} R_{ij}\rb{\partial_1,\ldots,\partial_{j-i}}, \quad (1\le i < j \le n+1),$$
$$R_{ij} \rb{\partial_1,\ldots,\partial_{j-i}} = \frac{\prod_{k=1}^{j-i} c_{(n+2-i-k)\partial_k} \prod_{k=\partial_1+1}^n p^{m_{(n+2-i)k}}}{\prod_{k=1}^{j-i} c_{(n+3-i-k)\partial_k} \prod_{k=1}^{j-i} \prod_{t=\partial_k}^{\partial_{k+1}} p^{m_{(n+2-i-k)t}}}.$$
To interpret the formula above, we set $c_{ij}:= 1$ and $m_{ij}:=0$ if the condition $1\le i \le j \le n$ is not satisfied, and $\delta_0:= 0$, $\partial_{j-i+1} := n$. As a convention, when $m_{ij} = 0$ for $1 \leq i \leq j \leq n$, we define $c_{ij}^{-1} := 0$ as a formal symbol.
\end{lemma}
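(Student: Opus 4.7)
The plan is induction on $n$, mirroring the block structure of the reduced representation \eqref{eq:wl_rrep}. The base case $n=1$ is an immediate unwinding of the definition of $b_\beta(a)$: the $2 \times 2$ matrix $b_{\alpha_1}(a_{11})$ admits a Bruhat decomposition $LNR$ whose entries can be compared directly against the lemma's formulas (using the conventions $c_{1,0} = c_{2,1} = 1$ and $c_{11}^{-1} = 0$ when $m_{11} = 0$).

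For the inductive step I would split
\[
b(\ul a) = B_1 \cdot B', \qquad B_1 := b_{\alpha_1}(a_{11}) b_{\alpha_2}(a_{12}) \cdots b_{\alpha_n}(a_{1n}),
\]
where $B'$ collects the remaining $\binom{n}{2}$ factors from blocks $2, \ldots, n$. Since every factor of $B'$ involves only the simple reflections $\alpha_1, \ldots, \alpha_{n-1}$, the matrix $B'$ is supported on the upper-left $n \times n$ block of $\GL(n+1)$ with a $1$ at position $(n+1, n+1)$, and after the shift $(i, j) \mapsto (i-1, j-1)$ it coincides with the analogous product for the long Weyl element of $\GL(n)$. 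The inductive hypothesis then supplies the Bruhat decomposition $B' = L' N' R'$ via the lemma's formulas at level $n-1$. In parallel, a direct calculation---using that each $b_{\alpha_k}(a_{1k})$ acts non-trivially only on rows and columns $k, k+1$---expresses $B_1$ explicitly: generically (when all $m_{1k} \ge 1$) it is lower bidiagonal with diagonal entries $c_{11}^{-1}, c_{11} c_{12}^{-1}, \ldots, c_{1(n-1)} c_{1n}^{-1}, c_{1n}$ and subdiagonal $p^{m_{11}}, \ldots, p^{m_{1n}}$; any super-diagonal contributions, which appear precisely when some $m_{1k} = 0$, are captured uniformly by the convention $c_{ij}^{-1} = 0$ whenever $m_{ij} = 0$.

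With both $B_1$ and $L' N' R'$ in hand, the core of the argument is to carry out the product $B_1 \cdot L' N' R'$ and to normalise the result into Bruhat form $L N R$. Right multiplication by $N'$ flips the first $n$ columns via $j \mapsto n+1-j$ (rescaling each by $N'_{n+1-j, j}$) and leaves column $n+1$ unchanged, while left multiplication by the (quasi-)bidiagonal $B_1$ mixes each row with its neighbours; peeling off upper-unipotent factors from each side then isolates the target $L, N, R$. The apparently mysterious sums over sequences $1 \le \delta_1 < \cdots < \delta_{j-i} \le j-1$ in the formula for $L_{ij}$, and $n+1-i \le \partial_1 \le \cdots \le \partial_{j-i} \le n$ in the formula for $R_{ij}$, enumerate precisely the intermediate indices that survive in this iterated matrix product: each sequence records the steps at which a fresh subdiagonal piece of $B_1$ is incorporated into the ``path'' that assembles a given entry of $b(\ul a)$ from the anti-diagonal of $N'$ and the unipotent parts of $L'$ and $R'$.

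The main obstacle will be the combinatorial bookkeeping: one must verify that every sign, product of $c_{ij}$'s, and power of $p^{m_{ij}}$ emerging from the normalisation of $B_1 L' N' R'$ matches the closed-form expressions of the lemma, and that the convention $c_{ij}^{-1} = 0$ correctly merges the cases $a_{ij} \in \Z_p$ and $a_{ij} \not\in \Z_p$. A convenient independent check for the anti-diagonal part is that $\prod_{k=i}^{n+1} N_{k, n+2-k}$ must equal the $(n+2-i) \times (n+2-i)$ minor of $b(\ul a)$ formed by its bottom rows and leftmost columns, which can be computed directly from the decomposition $b(\ul a) = B_1 \cdot B'$ without going through the full $LNR$ normalisation.
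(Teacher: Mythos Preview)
Your proposal is correct and follows essentially the same approach as the paper: induction on $n$, splitting off the first block $B_1$ (which the paper calls $\Upsilon$ and computes as the same lower-bidiagonal matrix you describe), applying the inductive hypothesis to the remaining $\binom{n}{2}$ factors supported in the upper-left $n\times n$ block, and then verifying that the product $B_1\cdot L'N'R'$ matches the claimed $LNR$ entry by entry. The paper phrases the final step as a direct term-matching verification of the identity $LNR=\Upsilon L'N'R'$ rather than as ``peeling off unipotents'', but the substance is identical.
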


\textbf{Remarks:} The hard part is to find these explicit formulae for the Bruhat decomposition. Once the formulae are given, the proof is a straightforward inductive verification by simply matching terms on both sides of the matrix equation. The indices for $N$ look overly complicated, but are chosen in analogy with the ones in Lemma \ref{lem5} below. Our application of Lemma \ref{lem:wl_Bruhat} to the proof of Theorem \ref{thm1} will only require the values of $L_{ij}$ and $R_{ij}$ on the first off-diagonal, i.e.\ for $j-i = 1$ for which the formulae simplify substantially.  

\begin{proof}
First we justify the definition $c_{ij}^{-1} := 0$ as a formal symbol when $m_{ij} = 0$. We recall the definition of $b_\beta(a)$ for $\beta\in\Phi^+$, $a\in\Q_p$. We write $a$ as a product $a=cp^{-m}$, with $m\ge 0$, $c\in\Z_p$, and $(c,p^m) = 1$. When $m\ge 1$, we have
\begin{equation}\label{eq:bbcpm_expand}b_{\beta}(cp^{-m}) = \phi_\beta\rb{\bp 1 & c^{-1}p^{-m}\\ & 1\ep \bp & -p^m\\ p^m \ep \bp  1 & c p^{-m}\\ & 1\ep}.
\end{equation}
Meanwhile, when $m=0$ we have
$$b_{\beta}(c) = \phi_\beta\rb{\bp 1 & 0\\ & 1\ep \bp & -1\\ 1 \ep \bp  1 & c\\ & 1\ep}.$$
Hence \eqref{eq:bbcpm_expand} also works for $m=0$ if we treat $c^{-1} := 0$ as a formal symbol. As $b_{\ul\beta}(\alpha)$ is a product of such matrices, our convention is justified.

Now we prove the actual formula by induction. For easier manipulation, we assume $m_{ij}\ge 1$ for all $1\le i \le j \le n$; when some $m_{ij} = 0$ we use the convention $c_{ij}^{-1} := 0$ to the result. When $n=1$, the formula reads
$$b_{\alpha_1}(a_{11}) = \bp 1 & c_{11}^{-1} p^{-m_{11}}\\ & 1\ep \bp & p^{-m_{11}}\\ p^{m_{11}} \ep \bp 1 & c_{11} p^{-m_{11}}\\ & 1\ep,$$
which is precisely \eqref{eq:bbadef}. For the general case, let
$$\ul\beta_{[n]} := \rb{\alpha_1,\ldots,\alpha_n,\alpha_1,\ldots,\alpha_{n-1},\ldots,\alpha_1,\alpha_2,\alpha_1}$$
denote the reduced representation \eqref{eq:wl_rrep}. By induction, we have a Bruhat decomposition
$$b_{\ul\beta_{[n-1]}}(a_{22},\ldots,a_{nn}) = \bp L'\\ & 1\ep \bp N'\\ & 1\ep \bp R'\\ & 1\ep,$$
where the entries $L'$, $N'$, $R'$ are given by the formulae above, with indices $ij$ replaced with $(i+1)(j+1)$. By a slight abuse of notation, we shall denote the matrices above also by $L'$, $N'$, $R'$ respectively. On the other hand, it is straightforward to compute that
$$\Upsilon:= b_{(\alpha_1,\ldots,\alpha_n)}\rb{a_{11},\ldots,a_{1n}} = \rb{\begin{smallmatrix} \frac{1}{c_{11}}\\ p^{m_{11}} & \frac{c_{11}}{c_{12}}\\ & p^{m_{12}} & \frac{c_{12}}{c_{13}}\\ && \ddots & \ddots\\ &&& p^{m_{1n}} & c_{1n}\end{smallmatrix}}.$$

So it remains to show that
\begin{equation}\label{eq:LNR_ULNR}
LNR = \Upsilon L' N' R' = b_{\ul\beta_{[n]}}(\ul a)
\end{equation}
is indeed a Bruhat decomposition of $b_{\ul\beta_{[n]}}(\ul a)$. This is a straightforward brute force computation. For convenience, we provide the details. We expand
\begin{align*}
(LNR)_{ij} &= \sum_{k,\ell} L_{ik} N_{k\ell} R_{\ell j}, & (\Upsilon L' N' R')_{ij} &= \sum_{r,k,\ell} \Upsilon_{ir} L'_{rk} N'_{k\ell} R'_{\ell j}. 
\end{align*}
As each row of $N$ has exactly one nonzero entry, we can collapse the sum and write
\begin{equation}\label{eq:LNR_sum}
(LNR)_{ij} = \sum_k L_{ik} N_{k(n+2-k)} R_{(n+2-k)j} = \sum_k \sum_{\delta,\partial} L_{ik}(\delta) R_{(n+2-k)j}(\partial).
\end{equation}
By the same argument, we write
\begin{multline}\label{eq:ULNR_sum}
(\Upsilon L'N'R')_{ij} = \frac{c_{1(i-1)}}{c_{1i}} \rb{\sum_{k=1}^n L'_{ik} N'_{k(n+1-k)} R'_{(n+1-k)j} + L'_{i(n+1)} R'_{(n+1)j}}\\
+ p^{m_{1(i-1)}} \rb{\sum_{k=1}^n L'_{(i-1)k} N'_{k(n+1-k)} R'_{(n+1-k)j} + L'_{(i-1)(n+1)} R'_{(n+1)j}}.
\end{multline}

Since
$$L'_{i(n+1)} R'_{(n+1)j} = \begin{cases} 1 & \text{ if } i=j=n+1,\\ 0 & \text{ otherwise,}\end{cases}$$
it follows that for $1\le i,j \le n$ we have
\ba
(\Upsilon L'N'R')_{ij} = &\;\frac{c_{1(i-1)}}{c_{1i}} \sum_{k=1}^n L'_{ik} N'_{k(n+1-k)} R'_{(n+1-k)j} + p^{m_{1(n-1)}}\sum_{k=1}^n L'_{(i-1)k} N'_{k(n+1-k)} R'_{(n+1-k)j}\\
= &\;\frac{c_{1(i-1)}}{c_{1i}} \sum_{k=1}^n \sum_{\delta', \partial'} L'_{ik}(\delta') N'_{k(n+1-k)} R'_{(n+1-k)j}(\partial')\\
&\hspace{3cm} + p^{m_{1(n-1)}}\sum_{k=1}^n \sum_{\delta'', \partial''} L'_{(i-1)k}(\delta'') N'_{k(n+1-k)} R'_{(n+1-k)j}(\partial''),
\ea
where
\begin{align*}
L'_{ij}(\delta) &= \frac{\prod_{k=1}^{j-i} c_{(\delta_k+1)(i-1+k)} \prod_{k=1}^{j-i} \prod_{t=\delta_{k-1}+1}^{\delta_k-1} p^{m_{(t+1)(i-1+k)}}}{\prod_{k=1}^{j-i} c_{(\delta_k+1)(i+k)} \prod_{k=1}^{\delta_{j-i}} p^{m_{(k+1)j}}}, & R'_{ij}(\partial) &= R_{ij}(\partial).
\end{align*}
It is then straightforward to verify that
\begin{multline*}
L_{ik}(1,\delta_2,\ldots,\delta_{k-i}) N_{k(n+2-k)} R_{(n+2-k)j}(\partial)\\
= \frac{c_{1(i-1)}}{c_{1i}} L'_{i(k-1)}(\delta_2-1,\ldots,\delta_{k-i}-1) N_{(k-1)(n+2-k)} R'_{(n+2-k)j}(\partial),
\end{multline*}
and
\begin{multline*}
L_{ik}(\delta_1,\ldots,\delta_{k-i}) N_{k(n+2-k)} R_{(n+2-k)j}(\partial) \\
= p^{m_{1(i-1)}} L'_{(i-1)(k-1)}(\delta_1-1,\ldots,\delta_{k-i}-1) N_{(k-1)(n+2-k)} R'_{(n+2-k)j}(\partial)
\end{multline*}
if $\delta_1\ge 2$. Matching the terms with \eqref{eq:LNR_sum} yields $(LNR)_{ij} = (\Upsilon L'N'R')_{ij}$ for $1\le i,j\le n$.

Now consider the case where $1\le i \le n$, $j=n+1$. From \eqref{eq:ULNR_sum}, we deduce that
\ba
(\Upsilon L' N' R')_{i(n+1)} = 0, \quad 1\le i \le n.
\ea
It remains to show that $(LNR)_{i(n+1)} = 0$ for $1\le i \le n$. By straightforward computation, we have
\begin{multline*}
L_{ik}(\delta_1,\ldots,\delta_{k-i}) N_{k(n+2-k)} R_{(n+2-k)j}(\partial_1,\ldots,\partial_{k-1})\\
= -L_{ik}(\delta_1,\ldots,\delta_{k-i+1}) N_{(k+1)(n+1-k)} R_{(n+1-k)j}(\partial'_1,\ldots,\partial'_k),
\end{multline*}
where $\partial'_1 = k$, $\partial'_\ell = \max\cb{k,\partial_{\ell-1}}$ for $2\le \ell \le k$, and $\delta_{k-i+1} = k + \sum_{\ell=1}^{k-1} \partial_\ell - \sum_{\ell=2}^k \partial'_\ell$. Putting this back into \eqref{eq:LNR_sum} yields $(LNR)_{i(n+1)} = 0$ as desired.

Now consider the case where $i=n+1$, $1\le j \le n$. Then \eqref{eq:LNR_sum} and \eqref{eq:ULNR_sum} say
\ba
(\Upsilon L' N' R')_{(n+1)j} &= p^{m_{1n}} \sum_{k=1}^n L'_{nk} N'_{k(n+1-k)} R'_{(n+1-k)j} = p^{m_{1(i-1)}} N'_{n1} R'_{1j}.\\
(LNR)_{(n+1)j} &= \sum_{k=1}^{n+1} L_{(n+1)k} N_{k(n+2-k)} R_{(n+2-k)j} = N_{(n+1)1} R_{1j}. 
\ea
Since $r_{1j}(\partial) = r'_{1j}(\partial)$, and $N_{(n+1)1} = p^{m_{1n}} N'_{n1}$, it follows that $(\Upsilon L' N' R')_{(n+1)j} = (LNR)_{(n+1)j}$.

Finally, for $i=j=n+1$, we have
\ba
(LNR)_{(n+1)(n+1)} = N_{(n+1)1} R_{1(n+1)} = c_{1n} = (\Upsilon L'N'R')_{(n+1)(n+1)}.
\ea
So \eqref{eq:LNR_ULNR} holds, finishing the proof.
\end{proof}

\begin{lemma}\label{lem:wl_coset}
Assume the settings above. For $\ul m \in \N_0^{n(n+1)/2}$, a complete system of coset representatives for $Y_{\ul\beta}(\ul m)/U(\Z_p)$ is given by
\[
\cbm{(c_{ij}p^{-m_{ij}})_{1\le i\le j\le n}}{
c_{ij}\,  \Big(\text{{\rm mod }} \prod_{k=j}^n p^{m_{ik}}\Big), 
 (c_{ij},p^{m_{ij}})=1}.
\]
\end{lemma}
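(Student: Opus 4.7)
The plan is to combine a cardinality check with an analysis via the Bruhat decomposition from Lemma~\ref{lem:wl_Bruhat}. First I would verify that the cardinality of the proposed set $S$ matches the orbit count of Lemma~\ref{lem:repcount}. For each pair $(i,j)$ with $1\le i\le j\le n$, the residue $c_{ij}$ ranges over $\Z/\prod_{k=j}^n p^{m_{ik}}\Z$ subject to $(c_{ij},p^{m_{ij}})=1$, contributing $\prod_{k=j}^n p^{m_{ik}}$ options multiplied by an extra $(1-p^{-1})$ whenever $m_{ij}>0$. Taking the product over all $(i,j)$ and interchanging summation order in the exponent yields
\[
|S|=p^{\sum_{1\le i\le k\le n}(k-i+1)m_{ik}}(1-p^{-1})^{\kappa(\ul m)}=p^{\height(\lambda)}(1-p^{-1})^{\kappa(\ul m)},
\]
using $\check\alpha_{ik}=\check\alpha_i+\cdots+\check\alpha_k$ in type $A$. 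This matches $\#\bigl(Y_{\ul\beta}(\ul m)/U(\Z_p)\bigr)$ by Lemma~\ref{lem:repcount}, so it will suffice to show that distinct elements of $S$ represent distinct orbits.

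For this I would invoke the bijection $\ul b_{\ul\beta}:Y_{\ul\beta}(\ul m)/U(\Z_p)\to X(n)$ from \cite[Proposition 3.3]{DR}: two elements of $Y_{\ul\beta}(\ul m)$ share an orbit iff $b(\ul a)$ and $b(\ul a')$ lie in the same $U(\Z_p)\backslash G(\Q_p)/U(\Z_p)$ double coset. Because $w_l$ conjugates $U$ into its opposite $U^-$, the Bruhat decomposition $g=LNR$ on the big cell is unique: a short argument using $U\cap TU^-=\{1\}$ in $\GL_{n+1}(\Q_p)$ shows that if $LNR=L'N'R'$ with $L,L',R,R'\in U$ and $N,N'$ in the torus$\cdot w_l$ coset then all three components agree. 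Hence the orbit of $\ul a$ is encoded by the pair $(U(\Z_p)L,\,R\,U(\Z_p))$, the antidiagonal $N$ being fixed by the valuation data $\ul m$.

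It remains to extract each $c_{ij}$ from this pair. I would proceed by induction on $n$, using the recursion $b_{\ul\beta_{[n]}}(\ul a)=\Upsilon\cdot b_{\ul\beta_{[n-1]}}(a_{22},\dots,a_{nn})$ established inside the proof of Lemma~\ref{lem:wl_Bruhat}. The top row of $L$ and the last column of $R$ are affected only by $\Upsilon$, i.e.\ only by $a_{11},\dots,a_{1n}$; in particular $L_{1(j+1)}$ has a single-summand closed form from which the first-row denominators $\prod_{k=1}^j p^{m_{1k}}$ emerge, and the analogous symmetry on $R$ exposes $\prod_{k=2}^i p^{m_{kn}}$. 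Since left multiplication by $U(\Z_p)$ adds only $\Z_p$-combinations of the lower rows to the top row (and symmetrically for $R$), these fractional parts cannot be annihilated, pinning down $c_{1j}$ modulo $\prod_{k=j}^n p^{m_{1k}}$ and $c_{in}$ modulo $\prod_{k=2}^i p^{m_{kn}}$. Peeling off this boundary data reduces the remaining parameters $\{c_{ij}:2\le i\le j\le n-1\}$ to a $\GL(n)$ Kloosterman problem, and the induction hypothesis closes the argument.

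The main obstacle is this last step: the formulas for $L_{ij}$ and $R_{ij}$ are sums over many tuples $(\delta_1,\dots,\delta_{j-i})$ or $(\partial_1,\dots,\partial_{j-i})$, and one must verify that only a dominant (minimal-valuation) summand controls the fractional part at the precision claimed, while the remaining summands either land in $\Z_p$ or depend on $c_{\cdot\cdot}$-data already fixed by induction. The valuation bookkeeping is combinatorial but delicate, and the stated moduli $\prod_{k=j}^n p^{m_{ik}}$ need to emerge naturally as the $p$-adic denominators of the chosen entries rather than be checked by brute force.
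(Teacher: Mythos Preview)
Your overall strategy matches the paper's: verify the cardinality via Lemma~\ref{lem:repcount} and then prove distinctness by induction on $n$. However, the concrete inductive scheme you propose does not work as written. First, your claim that the top row of $L$ depends only on $a_{11},\dots,a_{1n}$ is false: from Lemma~\ref{lem:wl_Bruhat} one computes $L_{1(j+1)}=\bigl(\prod_{k=1}^{j} c_{kk}\bigr)^{-1}\prod_{k=1}^{j} p^{-m_{kj}}$, which involves the diagonal variables $c_{11},\dots,c_{jj}$ and has $p$-denominator $\prod_k p^{m_{kj}}$, not $\prod_k p^{m_{1k}}$. Likewise $R_{i(n+1)}$ depends on all of $c_{1n},\dots,c_{nn}$. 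Second, the modulus you quote for $c_{in}$, namely $\prod_{k=2}^{i}p^{m_{kn}}$, belongs to the $w_\ast$ lemma, not the present one; here $c_{in}$ is taken modulo $p^{m_{in}}$. Third, and most structurally, the recursion $b_{\ul\beta_{[n]}}(\ul a)=\Upsilon\cdot b_{\ul\beta_{[n-1]}}(a_{22},\dots,a_{nn})$ separates off only the $i=1$ variables, so the inner problem involves $\{c_{ij}:2\le i\le j\le n\}$; your proposed reduction to $\{c_{ij}:2\le i\le j\le n-1\}$ strips both the $i=1$ and $j=n$ layers and is not supported by the factorisation.

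The paper's induction instead works directly with the matrix equation $b_{\ul\beta}(\ul a')=b_{\ul\beta}(\ul a)\,u$ for $u\in U(\Z_p)$, not with the Bruhat factors $L,R$ separately. Deleting the last row and column reduces to the $n-1$ problem in the variables $c_{ij}$ with $2\le i\le j\le n$, and induction gives $c_{ij}=c'_{ij}$ there together with $u_{ij}=0$ for $1\le i\le j\le n-1$. Only then does one turn to the last column, reading off $c'_{1n}=c_{1n}$ from the $(1,n+1)$-entry, solving for $u_{nn}$ in terms of the remaining $u_{jn}$, and proceeding down the column: at each step one isolates the unique term in $R_{i(n+1)}$ that involves an as-yet-undetermined $c_{1(\cdot)}$ and shows by a valuation estimate (the cancellation identity $R_{2j}(\delta_{[j]}(k))=R_{1j}R_{2r}(\delta_{[r]}(k))/R_{1r}$ and its analogues) that the residual sum lies in the required $p$-power ideal. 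So the delicate valuation bookkeeping you anticipate is indeed the heart of the proof, but it is carried out on the last \emph{column} of $b(\ul a)$ after the interior variables are already fixed, rather than on the first row of $L$ before anything else is known.
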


\textbf{Remark:} The shape of the system is not completely obvious (to us), but once it is given, the verification is somewhat lengthly, but straightforward.

\begin{proof}
From Lemma \ref{lem:repcount} we already know the number of coset representatives needed. So it remains to show that all these coset representatives are inequivalent under right action by $U(\Z_p)$. Again we argue inductively. The case $n=1$ is straightforward to verify. Indeed, suppose we have
\[
\bp 1 & c'_{11}p^{-m_{11}}\\ & 1\ep = \bp 1 & c_{11}p^{-m_{11}}\\ & 1\ep \bp 1 & u_{11}\\ & 1\ep
\]
for some $u_{11}\in \Z_p$. This actually says
\[
c'_{11}p^{m_{11}} = c_{11} p^{m_{11}} + u_{11},
\]
which implies $u_{11}=0$, and $c'_{11} = c_{11}$ as desired.

Now we consider the general case. For $n=r$, we set
\[
R := b_{\ul\beta}(c_{11}p^{-m_{11}},\ldots,c_{rr}p^{-m_{rr}}), \quad R' := b_{\ul\beta}(c'_{11}p^{-m_{11}},\ldots,c'_{rr}p^{-m_{rr}}),
\]
and 
\begin{equation*}
u = \rb{\begin{smallmatrix} 1 & u_{11} & \cdots & u_{1r}\\ & 1 & \cdots & u_{2r}\\ && \ddots & \vdots\\ &&& 1\end{smallmatrix}},
\end{equation*}
such that 
\begin{equation}\label{eq:cpcu}
R' = Ru.
\end{equation}
Removing the final column and the final row of the matrices yields the problem for $n=r-1$ (with a renaming of variables $c_{ij}\mapsto c_{(i-1)(j-1)}, c'_{ij}\mapsto c'_{(i-1)(j-1)}$). By induction, we deduce that the first $r$ rows and columns of $R$ and $R'$ are identical, and $u_{ij} = 0$ for all $1\leq i \le j\leq r-1$. Using Lemma \ref{lem:wl_Bruhat}, we deduce that $c_{ij} = c'_{ij}$ for $2\leq i\leq j\leq r$. 

It remains to consider the final columns of the matrices. The $(1,r+1)$-th entry of \eqref{eq:cpcu} reads
\begin{equation}\label{eq:cpcu1}
c'_{1r} \prod_{j=1}^r p^{-m_{jr}} = c_{1r} \prod_{j=1}^r p^{-m_{jr}} + \sum_{j=2}^{r+1} u_{(r+2-j)r} c_{jr} \prod_{k=j}^r p^{-m_{kr}},
\end{equation}
where again we set $c_{(r+1)r} := 1$. From \eqref{eq:cpcu1}, we deduce that $c'_{1r} = c_{1r}$. It then follows that
\begin{equation}\label{eq:urrSolve}
u_{rr} = -R_{1r}^{-1} \rb{u_{(r-1)r} R_{1(r-1)} + \ldots + u_{2r}R_{12} + u_{1r}}= -\sum_{j=1}^{r-1} u_{jr} \frac{c_{(r+2-j)r}}{c_{2r}} \prod_{k=2}^{r+1-j} p^{m_{kr}}. 
\end{equation}

Now we turn to the $(2,r+1)$-th entry of \eqref{eq:cpcu}. It reads
\begin{equation*}
R'_{2(r+1)} = R_{2(r+1)} + \sum_{j=2}^r u_{jn} R_{2j}.
\end{equation*}
From Lemma \ref{lem:wl_Bruhat}, we see that there is exactly one term in $R_{2(r+1)}$ and $R'_{2(r+1)}$ that depends on $c_{1j}$ for some $j\le r-1$. Since $c_{ij} = c'_{ij}$ for $2\le i\le j \le r$, we can remove all the other terms within $R_{2(r+1)}$ and $R'_{2(r+1)}$, and obtain
\begin{equation*}
\frac{c'_{1(r-1)} p^{m_{rr}-m_{1(r-1)}-m_{1r}}}{\prod_{k=2}^{r-1} p^{m_{k(r-1)}}} = \frac{c_{1(r-1)} p^{m_{rr}-m_{1(r-1)}-m_{1r}}}{\prod_{k=2}^{r-1} p^{m_{k(r-1)}}}+ \sum_{j=2}^r u_{jr} R_{2j}.
\end{equation*}
To show that $c'_{1(r-1)} = c_{1(r-1)}$, it suffices to prove that
\[
\sum_{j=2}^r u_{jr} R_{2j} \in p^{m_{rr}} \prod_{k=2}^{r-1} p^{-m_{k(r-1)}} \Z_p.
\]
Using \eqref{eq:urrSolve}, we rewrite
\begin{equation}\label{eq:cpcu2val}
\sum_{j=2}^r u_{jr} R_{2j} = \sum_{j=1}^{r-1} u_{jr} \rb{R_{2j} - \frac{R_{1j} R_{2r}}{R_{1r}}}.
\end{equation}
We expand
\[
R_{2j} = \sum_{n-1\le \partial_1 \le \ldots \le \partial_{j-2} \le n} R_{2j}(\partial),
\]
where
\[
R_{2j}(\partial) =\frac{c_{(r-1)\partial_1} \ldots c_{(r+2-j)\partial_{j-2}}}{c_{r\partial_1}\ldots c_{(r+3-j)\partial_{j-2}}} \frac{1}{p^M} \begin{cases} p^{m_{rr}}, &\partial_1 \not= r,\\ 1, & \partial_1 = r.\end{cases} 
\]
where 
$$M = m_{(r-1)\partial_1} + \ldots + m_{(r-1)\partial_2} + m_{(r-2)\partial_2} + \ldots + m_{(r-2)\partial_3} + \ldots +m_{(r+2-j) \partial_{j-2}} + \ldots + m_{(r+2-j)r}.$$
For $0\le l \le j-2$, we write $\partial_{[j]}(l) = (\partial_1,\ldots,\partial_{j-2})$, with $\partial_k = r-1$ if $k\le l$, and $\partial_k = r$ otherwise. From \eqref{eq:urrSolve}, it is easy to check that 
\[
\frac{R_{1j}R_{2r}(\partial_{[r]}(k))}{R_{1r}} \in p^{m_{rr}} \prod_{k=2}^{r-1} p^{-m_{k(r-1)}} \Z_p.
\]
for $k\ge j-1$. On the other hand, we verify that
\[
R_{2j}(\partial_{[j]}(k)) = \frac{R_{1j}R_{2r}(\partial_{[r]}(k))}{R_{1r}}
\]
for $0\le j \le k-2$. So we conclude that
\[
R_{2j} - \frac{R_{1j}R_{2r}}{R_{1r}} \in p^{m_{rr}} \prod_{k=2}^{r-1} p^{-m_{k(r-1)}} \Z_p
\]
for $1\le j \le r-1$. The claim then follows from \eqref{eq:cpcu2val}.

By similar arguments, we proceed inductively and show that
\[
\sum_{j=i}^r u_{jr} R_{ij} \in \prod_{k=r+2-i}^n p^{m_{(r+2-i)k}} \prod_{k+2}^{r+1-i} p^{-m_{k(r+1-i)}} \Z_p
\]
for $3\le i \le r$, and thus $c'_{1j} = c_{1j}$ for $1\le j \le r-2$. This finishes the proof of the statement.
\end{proof}

Combining the previous computations, we complete the proof of Theorem \ref{thm1}, noting  that $\lambda  = -\sum_{j=1}^n r_j \check{\alpha}_j \in \check{X}$, $r_j \geq 0$ corresponds to the components of $r = (r_1, \ldots, r_n)$.  From Lemma \ref{lem:wl_coset} we obtain the summation condition in \eqref{partial} and from Lemma \ref{lem:wl_Bruhat} the shape of the exponential for two characters as in \eqref{char}.

\section{Proof of Theorem \ref{thm2}}

The proof of Theorem \ref{thm2} is similar. We omit the analogous straightforward verification and just write down the relevant formulae. We fix a reduced representation of $w_{\ast}$ as follows
\begin{equation*}
w_{\ast} = s_{\alpha_1} s_{\alpha_2} \ldots s_{\alpha_{n-1}} s_{\alpha_n} s_{\alpha_{n-1}} \ldots s_{\alpha_2} s_{\alpha_1}.
\end{equation*}
Recall the definition of $\gamma_j$ in \eqref{eq:gdef}. For the reduced representation  of $w_*$, we have
$$\ul\gamma = \rb{\alpha_{11},\alpha_{12},\ldots,\alpha_{1n},\alpha_{nn},\ldots,\alpha_{2n}}.$$

Now we give a characterisation for $b(\ul a)$, for $\ul a = (a_{11},\ldots, a_{1n}, a_{nn},\ldots,a_{2n}) \in \Q_p^{2n-1}$. Again, every $a_{ij}\in\Q_p$ can be written uniquely as  $a_{ij} = c_{ij} p^{-m_{ij}}$, with $m_{ij} \ge 0$, $c_{ij}\in\Z_p$, and $(c_{ij},p^{m_{ij}}) = 1$. 

\begin{lemma}\label{lem5}
Let $\ul a = (a_{11},\ldots,a_{1n},a_{nn},\ldots,a_{2n}) \in \Q_p^{2n-1}$. Write $a_{ij} = c_{ij} p^{-m_{ij}}$, with $m_{ij} \ge 0$, $c_{ij}\in\Z_p$, and $(c_{ij},p^{m_{ij}}) = 1$. Then $b(\ul a)$ has a Bruhat decomposition $b(\ul a) = LNR$, where
$$L = \left(\begin{smallmatrix} 1 &L_{12} & L_{13} & \cdots & L_{1(n+1)}\\ & 1 & L_{23} & \ldots & L_{2(n+1)}\\   && \ddots&& \vdots\\ &&&1 & L_{n(n+1)}\\ &&&& 1\end{smallmatrix}\right), \quad N = \left(\begin{smallmatrix} &&&& N_{1(n+1)}\\ &N_{22}\\ &&\ddots\\ &&&N_{nn}\\ N_{(n+1)1}\end{smallmatrix}\right), \quad R = \left(\begin{smallmatrix} 1 &R_{12} & R_{13} & \cdots & R_{1(n+1)}\\ & 1 & R_{23} & \ldots & R_{2(n+1)}\\   && \ddots&& \vdots\\ &&&1 & R_{n(n+1)}\\ &&&& 1\end{smallmatrix}\right),$$
where
\begin{align*}
N_{1(n+1)} &= (-1)^n \prod_{j=1}^n p^{-m_{1j}}, & N_{ii} &= - p^{m_{1(i-1)}-m_{in}} \quad (2\le i \le n), & N_{(n+1)1} &= \prod_{j=1}^n p^{m_{jn}},
\end{align*}
\begin{align*}
L_{1j} &= (-1)^j c_{1(j-1)}^{-1} \prod_{k=1}^{j-1} p^{-m_{1k}} \quad (2\le j\le n),  & L_{1(n+1)} &= c_{11}^{-1} c_{2n}^{-1} \prod_{k=1}^n p^{-m_{kn}},
\end{align*}
$$L_{ij}= (-1)^{j-i+1} \rb{\frac{c_{1(i-1)}}{c_{1(j-1)}} \prod_{k=i}^{j-1} p^{-m_{1k}} + \frac{c_{1i} c_{(i+1)n}}{c_{1(j-1)} c_{in}} p^{m_{1(i-1)}-m_{in}} \prod_{k=i}^{j-1} p^{-m_{1k}}} \quad ( 2\le i < j \le n),$$
$$L_{i(n+1)} = \frac{c_{1(i-1)}}{c_{1i}c_{(i+1)n}} p^{-m_{1n}} \prod_{k=i+1}^n p^{-m_{kn}} + c_{in}^{-1} p^{m_{1(i-1)}-m_{1n}} \prod_{k=i}^n p^{-m_{in}} \quad  (2\le i \le n),$$
\begin{align*}
R_{1j} &= c_{jn} \prod_{k=2}^j p^{-m_{kn}}\quad (2\le j \le n), & R_{1(n+1)} &= c_{1n} \prod_{k=1}^n p^{-m_{kn}},
\end{align*}
$$R_{i(n+1)} = (-1)^{n-i} \rb{\frac{c_{1i} c_{(i+1)n}}{c_{in}} \prod_{k=i}^n p^{-m_{1k}} + c_{1(i-1)} p^{m_{in}} \prod_{k=i-1}^n p^{-m_{1k}}} \quad (2\le i \le n).$$
To interpret the formula above, we set $c_{ij}:= 1$ and $m_{ij}:=0$ if the condition $1\le i \le j \le n$ is not satisfied. As a convention, when $m_{ij} = 0$ for $1 \leq i \leq j \leq n$, we define $c_{ij}^{-1} := 0$ as a formal symbol.
\end{lemma}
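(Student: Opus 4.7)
The proof follows the strategy of Lemma 2: inductive verification by matching entries of the claimed Bruhat decomposition $LNR$ against the matrix product $b_{\ul\beta}(\ul a)$. As in the long-Weyl case, the convention $c_{ij}^{-1}=0$ when $m_{ij}=0$ is justified by observing that for $c\in\Z_p^\times$ the factor $b_\beta(c)$ agrees with the $m\ge 1$ expansion \eqref{eq:bbcpm_expand} once one formally sets the missing $c^{-1}$-entry to zero; we may therefore assume $m_{ij}\ge 1$ throughout the calculation and reinstate the convention at the end.

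The key step is the factorization $b_{\ul\beta}(\ul a) = \Upsilon\cdot Q$, where $\Upsilon = b_{(\alpha_1,\ldots,\alpha_n)}(a_{11},\ldots,a_{1n})$ is the lower bidiagonal matrix already identified in the proof of Lemma 2, and $Q = b_{(\alpha_{n-1},\alpha_{n-2},\ldots,\alpha_1)}(a_{nn},a_{(n-1)n},\ldots,a_{2n})$ is an ``anti-Voronoi'' matrix corresponding to the second half of the reduced expression \eqref{eq:wf_rrep}. To describe $Q$ explicitly I would perform a short induction on $n$: each right-multiplication by $b_{\alpha_j}(a_{(j+1)n})$ touches only columns $j,j+1$, and since consecutive factors in $Q$ overlap in a single coordinate the product telescopes cleanly. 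The outcome is that the non-zero entries of $Q$ are concentrated in its last row, its last column, and along one anti-diagonal, all expressible in terms of the $c_{in}$ and $p^{m_{in}}$.

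With $Q$ in hand, one forms $\Upsilon\cdot Q$ entry by entry and compares with the claimed $LNR$. The first row and the last row are essentially forced by the bidiagonal shape of $\Upsilon$ composed with the two marginal rows of $Q$, and they pin down $N_{1(n+1)}$, $N_{(n+1)1}$, the entries $L_{1j}$, $L_{1(n+1)}$ and $R_{1j}$, $R_{1(n+1)}$ at once. The substantive case is the interior range $2\le i \le n$: here $L_{ij}$ for $2\le i<j\le n$ and $L_{i(n+1)}$ each carry two summands, which correspond to the two ``paths'' through $\Upsilon Q$ (via the diagonal and via the sub-diagonal entries of $\Upsilon$) that feed the same $(i,j)$-entry of the product. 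The main obstacle is the careful bookkeeping of these twin contributions together with the Bruhat normalization that produces $N_{ii} = -p^{m_{1(i-1)}-m_{in}}$ cleanly; a useful consistency check is that $N$ is forced by the shape of $w_\ast$ to be non-zero only at the positions $(1,n+1)$, $(i,i)$ for $2\le i\le n$, and $(n+1,1)$, so once $L$ and $R$ are matched the structure of $N$ is essentially automatic.
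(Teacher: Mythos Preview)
Your overall strategy---factor $b_{\ul\beta}(\ul a)=\Upsilon\cdot Q$ with $\Upsilon=b_{(\alpha_1,\ldots,\alpha_n)}(a_{11},\ldots,a_{1n})$ and $Q=b_{(\alpha_{n-1},\ldots,\alpha_1)}(a_{nn},\ldots,a_{2n})$, compute each factor explicitly, multiply, and match entries with $LNR$---is sound and is in the spirit of the paper's own proof, which simply says ``similar to the proof of Lemma~\ref{lem:wl_Bruhat}''. The convention $c_{ij}^{-1}:=0$ when $m_{ij}=0$ is justified exactly as you say.

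There is, however, a concrete error in your description of $Q$. Under the standing assumption $m_{ij}\ge 1$, each factor $b_{\alpha_j}(a_{(j+1)n})=\phi_{\alpha_j}\Big(\begin{smallmatrix} c_{(j+1)n}^{-1} & 0\\ p^{m_{(j+1)n}} & c_{(j+1)n}\end{smallmatrix}\Big)$ is lower triangular, so $Q$ is lower triangular; moreover the last row and column of $Q$ are $(0,\ldots,0,1)$ since none of $\alpha_1,\ldots,\alpha_{n-1}$ touches position $n+1$. The non-zero entries of $Q$ fill the entire lower triangle of the first $n$ rows and columns, not merely a last row, last column, and an anti-diagonal. (Already for $n=3$ one finds $Q=\Big(\begin{smallmatrix} c_{23}^{-1}&0&0&0\\ c_{33}^{-1}p^{m_{23}}&c_{33}^{-1}c_{23}&0&0\\ p^{m_{33}+m_{23}}&p^{m_{33}}c_{23}&c_{33}&0\\ 0&0&0&1\end{smallmatrix}\Big)$.) The ``telescoping'' that makes $\Upsilon$ bidiagonal relies on the ascending order $b_{\alpha_1}b_{\alpha_2}\cdots$; the descending order in $Q$ produces instead a full triangular matrix whose $(i,j)$ entry (for $1\le j\le i\le n$) is a single monomial in the $c_{kn}^{\pm 1}$ and $p^{m_{kn}}$.

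This does not invalidate your plan---every entry of $Q$ is still a single monomial, so the entrywise verification of $\Upsilon Q=LNR$ remains a finite, mechanical computation---but the bookkeeping for the interior entries $L_{ij}$ with $2\le i<j\le n$ will involve more than the ``two paths'' you anticipate: multiplying the bidiagonal $\Upsilon$ into the full lower-triangular $Q$ and then splitting off $L$, $N$, $R$ requires tracking several cancellations, which is where the two-term structure of $L_{ij}$ and $R_{i(n+1)}$ actually emerges. If you want a cleaner induction more parallel to Lemma~\ref{lem:wl_Bruhat}, you can instead peel off the outer factors $b_{\alpha_1}(a_{11})$ and $b_{\alpha_1}(a_{2n})$ simultaneously, reducing to $w_\ast$ for $\GL(n)$ embedded in rows and columns $2,\ldots,n+1$.
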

\begin{proof}
Similar as the proof of Lemma \ref{lem:wl_Bruhat}.
\end{proof}

\begin{lemma}
Assume the settings above. For $\ul m \in \N_0^{2n-1}$, a complete system of coset representatives for $Y_{\ul\beta}(\ul m)/U(\Z_p)$ is given by
\ba
\cbm{(c_{ij}p^{-m_{ij}})_{\substack{ i=1, 1\le j \le n\\ 2\le i\le n, j=n}}}{\begin{array}{l} c_{1j} \rb{\mod\textstyle \prod_{k=j}^n p^{m_{1k}}},\; 1\le j \le n,\\ c_{in} \rb{\mod\textstyle \prod_{k=2}^i p^{m_{kn}}},\; 2\le i \le n,\end{array},\; (c_{ij},p^{m_{ij}})=1}.
\ea
\end{lemma}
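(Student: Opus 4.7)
The plan is to mirror the strategy of Lemma \ref{lem:wl_coset}, adapted to the hook-shaped index set of $w_{\ast}$. First I would check that the proposed system has the correct size, as prescribed by Lemma \ref{lem:repcount}. The set contains $(1-p^{-1})^{\kappa(\ul m)} \prod_{j=1}^n \prod_{k=j}^n p^{m_{1k}} \cdot \prod_{i=2}^n \prod_{k=2}^i p^{m_{kn}}$ elements, and swapping the order of the double products yields the exponent $\sum_{j=1}^n j\, m_{1j} + \sum_{i=2}^n (n-i+1)\, m_{in}$. Since this equals $\sum_{(i,j)} (j-i+1) m_{ij} = \sum_{k=1}^n r_k = \height(\lambda)$ for the hook index set, the count matches $p^{\height(\lambda)}(1-p^{-1})^{\kappa(\ul m)}$.

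For inequivalence I would suppose $b_{\ul\beta}(\ul a') = b_{\ul\beta}(\ul a)\, u$ for some $u = (u_{ij}) \in U(\Z_p)$, with $\ul a, \ul a'$ from the proposed system, and extract $c_{ij} = c'_{ij}$ in two passes. In the first pass I read off the bottom row of $b_{\ul\beta}(\ul a) = LNR$: since $L_{(n+1),k} = \delta_{k,n+1}$ and only $N_{(n+1),1}$ is nonzero in the bottom row of $N$, one has $B_{(n+1),j} = N_{(n+1),1} R_{1,j}$, a simple monomial in $c_{jn}$ for $2 \le j \le n$ and equal to $c_{1n}$ at $j = n+1$. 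Matching $B'_{(n+1),j} = \sum_{k \le j} B_{(n+1),k}\, u_{k,j}$ iteratively for $j = 2, 3, \ldots, n+1$ produces identities of the form $c'_{jn} - c_{jn} \in \prod_{k=2}^j p^{m_{kn}} \Z_p$ (and analogously $c'_{1n} - c_{1n} \in p^{m_{1n}} \Z_p$ from $j = n+1$); together with the modular ranges in the proposed system these force $c'_{jn} = c_{jn}$ as integers and yield the vanishing of enough $u_{1,k}$ to run the next pass. In the second pass I would recover $c_{1j}$ for $1 \le j \le n-1$ by inspecting the first-row entries $B_{1,j}$, whose dominant contribution $L_{1,j} N_{j,j}$ is a unit multiple of $c_{1(j-1)}^{-1}$; after subtracting the now-known $c_{in}$ contributions, one peels off $c_{1(n-1)}, c_{1(n-2)}, \ldots, c_{11}$ in turn using the modular ranges $0 \le c_{1j} < \prod_{k=j}^n p^{m_{1k}}$.

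The main obstacle is that, unlike the purely anti-diagonal $N$ in the $w_l$ case, the matrix $N$ for $w_{\ast}$ carries a middle diagonal block with entries $N_{ii} = -p^{m_{1(i-1)} - m_{in}}$ for $2 \le i \le n$, in addition to the two corner entries $N_{1(n+1)}$ and $N_{(n+1),1}$. Consequently the intermediate entries $B_{i,j}$ with $2 \le i,j \le n$ are sums over three distinct blocks of $N$, and isolating the relevant $c_{ij}$ requires careful tracking of $p$-adic valuations of the various cross-terms. Handling degenerate cases $m_{ij} = 0$ via the convention $c_{ij}^{-1} := 0$ (as in the proof of Lemma \ref{lem:wl_coset}) adds case-analysis complexity but introduces no fundamentally new ideas.
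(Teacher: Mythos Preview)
Your count is correct and your first pass via the bottom row is sound: since $L$ is upper unipotent, the bottom row of $B = LNR$ equals $N_{(n+1),1}$ times the first row of $R$, and from $R' = Ru$ (which holds exactly by uniqueness of the Bruhat decomposition when $u \in U_{w_*}(\Z_p)$) one reads off $c'_{jn} = c_{jn}$ and $c'_{1n} = c_{1n}$ just as you describe.

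The second pass, however, does not work as written. When $m_{11} > 0$ and $m_{2n} > 0$, the first and last factors in the product $b_{\alpha_1}(a_{11}) \cdots b_{\alpha_1}(a_{2n})$ each have first row $(c^{-1}, 0, \ldots, 0)$, and the intermediate factors (which only involve $\alpha_2, \ldots, \alpha_n$) leave row $1$ untouched. Hence the first row of $B$ is $(c_{11}^{-1} c_{2n}^{-1}, 0, \ldots, 0)$: all entries $B_{1,j}$ with $j \ge 2$ vanish identically and carry no information whatsoever about $c_{1(j-1)}$. What you call the ``dominant contribution'' $L_{1,j} N_{j,j}$ is exactly cancelled by the corner term $L_{1,n+1} N_{(n+1),1} R_{1,j}$. (This is not an accident: $B \in G(\Z_p)$ forces all its entries to be integral, so the individual Bruhat pieces, which carry negative valuations, must cancel.)

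The paper's own proof is the single line ``Similar as the proof of Lemma \ref{lem:wl_coset}'', so there is no detailed comparison to make, but the template it points to is \emph{inductive}: for $w_l$ one peels off the initial block $b_{\alpha_1} \cdots b_{\alpha_n}$ and reduces to the $(n-1)$-dimensional problem embedded in the top-left block. The analogue for $w_*$ is the factorisation coming from $s_{\alpha_1} \, w_*^{(n-1)} \, s_{\alpha_1}$, stripping off $b_{\alpha_1}(a_{11})$ on the left and $b_{\alpha_1}(a_{2n})$ on the right. Alternatively, since $R' = Ru$ holds on the nose, you can extract the remaining $c_{1j}$ from the \emph{last column} of $R$: the entries $R_{i,(n+1)}$ for $i = n, n-1, \ldots, 2$ involve $c_{1i}$ and $c_{1(i-1)}$, and processing them downward (starting from the known $c_{1n}$) recovers each $c_{1(i-1)}$ in turn, though the coupling between consecutive indices requires combining these constraints with the integrality of $L' L^{-1}$. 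Either route works; the first-row route does not.
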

\begin{proof}
Similar as the proof of Lemma \ref{lem:wl_coset}.
\end{proof}

Combining the above results, we complete the proof of Theorem \ref{thm2}. 

\section{Non-trivial bounds for Kloosterman sums}\label{non-trivial}

\subsection{General preparation}
 In this section we prove Corollaries \ref{cor3} and \ref{cor5}. We first prove Corollary \ref{cor3}. The idea is that the partial Kloosterman sum ${\rm Kl}_{p}(\underline{m}, \psi, \psi', w_l)$ defined in \eqref{partial} with
\begin{equation}\label{rm}
\sum_{i \leq k \leq j} m_{ij} = r_k, \quad 1 \leq k \leq n, 
\end{equation}
is a nested sum of classical ${\rm GL}(2)$ Kloosterman sums, for which we have Weil's bound \eqref{weil} available.  We start with a simple lemma.

\begin{lemma}\label{lem7}  Let $\gamma_1, \gamma_2 \geq 0$, $b_1, b_2 \in \Bbb{Z}$ with $\min(b_1, b_2) < 0$. Then
$$\sum_{c_1 = 1}^{p^{\gamma_1}} \sum_{c_2 = 1}^{p^{\gamma_2}} |c_1 p^{b_1} + c_2 p^{b_2}|^{-1/2}_p \ll  p^{\gamma_1+\gamma_2 + \frac{1}{2}\min(b_1, b_2)}.$$
\end{lemma}

\begin{proof} The sum on the left hand side equals
\begin{equation}\label{sum}
\sum_{\delta_1 = 0}^{\gamma_1} \sum_{\delta_2 = 0}^{\gamma_2} \sum_{\substack{c_1 = 1\\ (c_1, p) = 1}}^{p^{\gamma_1 - \delta_1}} \sum_{\substack{c_2 = 1\\ (c_2, p) = 1}}^{p^{\gamma_2 - \delta_2}} |c_1 p^{b_1 + \delta_1} + c_2 p^{b_2 + \delta_2}|^{-1/2}_p .
\end{equation}
Suppose for notational simplicity that $b_2 \leq b_1$ (the other case is completely analogous). 
 Let us first assume that $b_1 + \delta_1 \not= b_2 + \delta_2$. Then the two inner sums are bounded by$$p^{\gamma_1 - \delta_1 + \gamma_2 - \delta_2 + \frac{1}{2}\min(\delta_1 + b_1, \delta_2 + b_2)} \leq p^{\gamma_1  + \gamma_2  + \frac{1}{2} b_2 -  \delta_1 - \frac{1}{2} \delta_2}$$
where the second inequality can be seen by distinguishing the cases 
 $\delta_2 + b_2 \leq \delta_1 + b_1$ and $\delta_2 + b_2 > \delta_1 + b_1$. 

Let us now assume $b_1 + \delta_1 = b_2 + \delta_2$. Then the inner two sums are  at most
\begin{displaymath}
\begin{split}
& p^{\frac{1}{2}(b_2 + \delta_2)} \sum_{\delta \leq \max(\gamma_2 - \delta_2, \gamma_1 - \delta_1)} p^{\delta/2} \underset{p^{\delta} \mid c_1 + c_2}{\sum_{ c_1 = 1 }^{p^{\gamma_1 - \delta_1}} \sum_{ c_2 = 1 }^{p^{\gamma_2 - \delta_2}}} 1\\
 \leq & p^{\frac{1}{2}(b_2 + \delta_2)} \sum_{\delta \leq \max(\gamma_2 - \delta_2, \gamma_1 - \delta_1)} p^{\delta/2}  \Big(p^{\min(\gamma_1 - \delta_1, \gamma_2 - \delta_2)} + \frac{p^{\gamma_1 - \delta_1 + \gamma_2 - \delta_2}}{p^{\delta}}\Big) \\
\ll & p^{\frac{1}{2}(b_2 + \delta_2)}\big(p^{\frac{1}{2}(\gamma_1 + \gamma_2 - \delta_1 - 2\delta_2) } + p^{\gamma_1 + \gamma_2 - \delta_1 - \delta_2}\big). 
\end{split}
\end{displaymath}
(For $p = 2$ the $\delta$-sum runs up to   $\max(\gamma_2 - \delta_2, \gamma_1 - \delta_1) + 1$.) Thus in all cases we bound \eqref{sum} by
$$\ll \sum_{\delta_1 = 0}^{\gamma_1} \sum_{\delta_2 = 0}^{\gamma_2} p^{\gamma_1 + \gamma_2 + \frac{1}{2}\min(b_1, b_2) - \frac{1}{2}(\delta_1 + \delta_2)} \ll p^{\gamma_1 + \gamma_2 + \frac{1}{2}\min(b_1, b_2)},$$
and the lemma follows. \end{proof}

We return to the partial Kloosterman sum \eqref{partial} for the long Weyl element. Let 
\begin{equation}\label{cij}
C_{ij} = p^{m_{ij} + \ldots + m_{in}}
\end{equation}
be the modulus of the $c_{ij}$-sum, for any $1 \leq i \leq j \leq n$. For $j < i$ we put $C_{ij} = 1$. 

Let us fix one variable $c_{ij}$. Then the $c_{ij}$-sum in \eqref{partial} is given by
\begin{equation}\label{sigmaij}
\Sigma_{ij} := \sum_{\substack{1 \leq c_{ij} \leq C_{ij}\\ (c_{ij}, p^{m_{ij}}) = 1}} e(c_{ij} A + \bar{c}_{ij}B)
\end{equation}
where 
\begin{displaymath}
\begin{split}
A  = A_{ij}&=  \psi_{j+1} \bar{c}_{i, j+1} p^{a_1} + \psi'_{n+1-i}\bar{c}_{i+1, j} p^{a_2},  \\
B = B_{ij}  &=  \psi_j  c_{i, j-1} p^{b_1} +\psi'_{n+2-i} c_{i-1, j} p^{b_2}  
\end{split}
\end{displaymath}
with
\begin{equation}\label{a1etc}
\begin{split}
a_1 = a_1(i, j) = & m_{1j} + \ldots +m_{i-1, j} - m_{1, j+1} - \ldots - m_{i, j+1},\\
a_2 =  a_2(i, j) = & m_{i+1, n} + \ldots +m_{i+1, j+1} - m_{i, n} - \ldots - m_{i, j},\\
b_1 = b_1(i, j) = & m_{1,j-1} + \ldots +m_{i-1, j-1} - m_{1, j} - \ldots - m_{i, j},\\
b_2 = b_2(i, j) = & m_{i, n} + \ldots +m_{i, j+1} - m_{i-1, n} - \ldots - m_{i-1, j}.
\end{split}
\end{equation}
Here we apply the following conventions, in this order: if $m_{ij} = 0$ for some $1 \leq   i \leq j \leq n$, we put $\overline{c_{ij}} = 0$. If $j < i$, we put $m_{ij} = 0$ and $c_{ij} = 1$ and $C_{ij} = 1$. If none of the above cases apply, and $i < 0$ or $j > n$, we put $c_{ij} = \overline{c_{ij}}= m_{ij} = 0$ and $C_{ij} = 1$. 

Let us   assume $$m_{ij} \not= 0.$$  Let $v_p(A) = -\alpha$, $v_p(B) = -\beta$. Assume without loss of generality $\alpha \geq \beta$, the other case is analogous. If $\alpha \leq 0$, then trivially $|\Sigma_{ij}| \leq C_{ij}$. If $\alpha > 0$, we extend the range of summation to avoid issues of well-definedness, and obtain by Weil's bound
\begin{displaymath}
\begin{split}
|\Sigma_{ij}| &= \Big|p^{-\alpha}\sum_{\substack{1 \leq c_{ij} \leq C_{ij}p^{\alpha}\\ (c_{ij}, p)= 1}} e\Big(\frac{c_{ij} A C_{ij}p^{\alpha} + \bar{c}_{ij}B C_{ij} p^{\alpha}}{C_{ij} p^{\alpha}}\Big)\Big| \\
&\leq 2 p^{-\alpha} (C_{ij}p^{\alpha})^{1/2} (C_{ij}, C_{ij}p^{\alpha-\beta}, C_{ij}p^{\alpha})^{1/2} = 2 C_{ij} p^{-\alpha/2}.
\end{split}
\end{displaymath}
We  conclude in all cases (still assuming $m_{ij} \not = 0$)
\begin{equation}\label{kloosterbound}
|\Sigma_{ij}| \leq 2C_{ij} \min(1, |A_{ij}|^{-1/2}_p, |B_{ij}|^{-1/2}_p).
\end{equation}
Note that this uses no specific information about $A$ and $B$ and holds for any sum of the type \eqref{sigmaij}.\\

From this and the previous lemma we see  that 
\begin{equation*}
\begin{split}
\sum_{c_{i, j-1}} \sum_{c_{i-1, j}} |\Sigma_{ij}| 
&\leq\big(\max_{1 \leq j \leq n}|\psi_j|_p^{-1/2}\big)  C_{i, j-1} C_{i-1, j} C_{ij}  p^{\frac{1}{2}\min(0, b_1(i, j))}
\end{split}
\end{equation*}
if  $i \not= 1$ (in which case $i-1 > 0$). Note that this continues to hold for $i = j$
  by our general conventions. If $i = 1$ (in which case $c_{i-1, j} = 0$), a similar, but simpler argument confirms the bound, too. Here we dropped potential savings in the exponents $a_1, a_2, b_2$.

If $m_{ij} = 0$ we simply estimate trivially, and therefore obtain in all cases the bound
\begin{equation}\label{klooster}
\begin{split}
\sum_{c_{i, j-1}} \sum_{c_{i-1, j}} |\Sigma_{ij}| 
&\leq \big(\max_{1 \leq j \leq n}|\psi_j|_p^{-1/2}\big)  C_{i, j-1} C_{i-1, j} C_{ij}  p^{\frac{1}{2}b^{\ast}_1(i, j)}
\end{split}
\end{equation}
where
 $$b_*(i, j) := \delta_{m_{ij} \not= 0}\min(0, b_1(i,j)).$$


\subsection{A soft argument}\label{sec52} With a view towards possible generalizations we first demonstrate a  soft argument. We define an ordering on the set of indices $(i, j)$, $1 \leq i \leq j \leq n$ as follows
\begin{equation}\label{order}
(1,  n) < (1, n-1) < \ldots < (1, 1) < (2, n) < \ldots < (2, 2) < \ldots < (n, n).
\end{equation}
Let $$\mu_{ij} = \max_{(\alpha, \beta) < (i, j)} m_{\alpha\beta}.$$
Then \eqref{klooster} implies 
 \begin{equation*}
\sum_{\substack{c_{\nu\mu}\\ (\nu, \mu) \not= (i, j)}} \Big| \sum_{c_{ij}} (...) \Big| \ll\big(\max_{1 \leq j \leq n} |\psi_j|^{-1/2}_p  \big) \Big(\prod_{1 \leq i \leq j \leq n} C_{ij}\Big) p^{-m_{ij}/2+ O(\mu_{ij})}
\end{equation*}
(which is trivially true if $m_{ij} = 0$) for any $(i, j)$. 
Choosing the index pair $(i, j)$ suitably, we conclude
$${\rm Kl}_p(\underline{m}, \psi, \psi', w_l) \ll \big(\max_{1 \leq j \leq n} |\psi_j|^{-1/2}_p  \big) p^{r_1 + \ldots + r_n -\delta_0 \max_{i, j} m_{ij}}$$
for some $\delta_0 > 0$ (depending on $n$), from which we easily obtain the statement  of  Corollary \ref{cor3}, observing that the number of $\underline{m}$ for a given vector $r$ is $O(p^{\varepsilon(r_1 + \ldots + r_n)})$ for every $\varepsilon > 0$. 

\subsection{A  refined argument}\label{refined}  The previous argument uses cancellation only in one index pair $(ij)$. It is very flexible and requires only the ordering \eqref{order}, but no further computations. On the other hand, it gives only a small value of $\delta$ (exponentially decreasing in $n$). A more refined argument runs as follows. 
We partition the index pairs into 4 classes depending on the parity of $i$ and $j$ and obtain
  $${\rm Kl}_{p}(\underline{m}, \psi, \psi', w_l) \ll \Big(\prod_{1 \leq i \leq j \leq n} C_{ij}\Big)  \Big(\prod_{\substack{1 \leq i \leq j \leq n\\ i \equiv i_0 \, (\text{mod } 2)\\ j \equiv j_0 \, (\text{mod } 2)}} p^{\frac{1}{2}b_{\ast}(i, j)}\Big) $$
  for $i_0, j_0 \in \{0, 1\}$. Recall that 
  $$\prod_{1 \leq i \leq j \leq n} C_{ij} = \prod_{1 \leq i \leq j \leq n} p^{(j-i+1)m_{ij}},$$
  cf.\ \eqref{cij} and also \eqref{jminusi}.   Taking geometric means, we get
  $${\rm Kl}_{p}(\underline{m}, \psi, \psi', w_l) \ll \Big(\prod_{1 \leq i \leq j \leq n} C_{ij}\Big)  \Big(\prod_{ 1 \leq i \leq j \leq n} p^{\frac{1}{8}b_{\ast}(i, j)}\Big) .$$
We now observe that
$$\sum_{1 \leq i \leq j \leq n} (n+1-j)b_1(i, j) = -\sum_{1 \leq i \leq j \leq n} (j-i+1)m_{ij}$$
and 
  $$\sum_{i_0 = 1}^i b_{\ast}(i_0, j) \leq b_1(i, j).$$
  Taken together, this implies
   $$\sum_{1 \leq i \leq j \leq n} n^2 b_{\ast}(i, j)  \leq \sum_{1 \leq i \leq j \leq n} (n+1-j)(n+1-i) b_{\ast}(i, j) \leq -\sum_{1 \leq i \leq j \leq n} (j-i+1)m_{ij},$$
  and so
   $${\rm Kl}_{p}(\underline{m}, \psi, \psi', w_l) \ll \Big(\prod_{1 \leq i \leq j \leq n} C_{ij}\Big)^{1 - \frac{1}{8n^2}}  .$$

\subsection{The element $w_{\ast}$}

The proof of Corollary \ref{cor5} is similar. We apply again a soft argument and  use the ordering
$$(1n) < (1, n-1) < \ldots < (11) < (2, n) < (3, n) < \ldots < (n, n).$$
Analyzing \eqref{partial1}, we see that  $\Sigma_{ij}$ is of the shape \eqref{sigmaij} with
\begin{displaymath}
\begin{split}
 &A_{1n} = \psi'_n\overline{c_{nn}} p^{-m_{1n}}, \quad B_{1n} = \psi_n c_{1(n-1)}p^{-m_{1n}},\\
 &A_{1(n-1)} = \psi'_n p^{m_{nn}- m_{1(n-1)} + m_{1n}} + \psi_n\overline{c_{1n}} p^{-m_{1n}},  \quad B_{1(n-1)} = \psi_{n-1}c_{1(n-2)} p^{-m_{1(n-1)}},\\
 & A_{1 j} = \psi_{j+1}\overline{c_{1(j+1)}} p^{-m_{1( j+1)}} ,\quad B_{1j} =  \psi_j c_{1(j-1)} p^{-m_{1j}},   \quad 1 \leq j \leq n-2,\\
 &A_{2n} = \psi'_1p^{-m_{2n}}, \quad B_{2n} = \psi_2c_{3n}p^{-m_{12}+ m_{11} - m_{2n}},\\
 & A_{in} = \psi_{i-1}\overline{c_{(i-1)n}} p^{-m_{1(i-1)} + m_{1(i-2)}  - m_{(i-1)n}}, \quad  B_{in} = \psi_ic_{(i+1)n} p^{-m_{1i} + m_{1(i-1)} -m_{in}}, \quad 3 \leq i \leq n-1,\\
 & A_{nn} = \psi_{n-1}\overline{c_{(n-1)n}}p^{-m_{1(n-1)} + m_{1(n-2)} - m_{(n-1)n} }, \quad B_{nn} = \psi'_nc_{1n}p^{-m_{1n}}+ \psi_np^{m_{1(n-1)}  - m_{1n} -  m_{nn}}
\end{split}
\end{displaymath}
with the same conventions as explained after \eqref{a1etc}.

Arguing as before based on \eqref{kloosterbound},   we obtain
 $$\sum_{\substack{c_{\nu\mu}\\ (\nu, \mu) \not= (i, j)}} \Big| \sum_{c_{ij}} (...) \Big| \ll\big(\max_{1 \leq j \leq n} |\psi_j|^{-1/2}_p  \big) p^{r_1 + \ldots + r_n -m_{ij}/2 + O(\mu_{ij})}$$ 
 and conclude the proof as in Section \ref{sec52} for some $\delta > 0$. 
 
 We can make this quantitative as in Section \ref{refined}. We have
 \begin{displaymath}
 \begin{split}
 & \sum_{c_{11}}(...) \ll |\psi_1|_p^{-1/2} C_{11} p^{-m_{11}/2}, \\
 & \sum_{c_{1(j-1)}} \Big| \sum_{c_{1j}} (...)\Big| \ll |\psi_j|^{-1/2}_p C_{1(j-1)} C_{1j} p^{-m_{1j}/2}, \quad 2 \leq j \leq n,\\
 & \sum_{c_{(i+1)n}} \Big| \sum_{c_{in}} (...)\Big| \ll |\psi_i|_p^{-1/2} C_{(i+1)n}C_{in} p^{-\delta_{m_{in} \not= 0} (m_{in} + m_{1i} - m_{1(i-1)})/2}, \quad 2 \leq i \leq n-1,\\
 \end{split}
 \end{displaymath}
and by a small variation of Lemma \ref{kloosterbound}, we also have
$$\sum_{c_{1n}} \Big| \sum_{c_{nn}} (...) \Big| \ll |\psi_n|_p^{-1/2} C_{1n} C_{nn} p^{-\delta_{m_{nn} \not= 0} (m_{nn} + m_{1n} - m_{1(n-1)})/2}.$$
We put $b(1, j) = - m_{1j}$, $b(i, n) = -m_{in} - m_{1n} + m_{1(n-1)}$ for $i \geq 2$, and $b_{\ast}(i, j)= \delta_{m_{ij} \not = 0} \min(0, b(i, j))$. We put the $2n+1$ nodes $(i, j)$ with $i = 1$ or $j= n$ into the two classes $\mathcal{C}_1$ with indices of the form $(1, \text{odd})$, $(\text{even}, n)$ and $\mathcal{C}_2$ with indices of the form $(1, \text{even})$, $(\text{odd}, n)$.  Then
 $${\rm Kl}_{p}(\underline{m}, \psi, \psi', w_{\ast}) \ll \Big(\prod_{i, j} C_{ij} \Big)  \min_{\nu = 1, 2} \Big(\prod_{(i, j) \in \mathcal{C}_\nu} p^{\frac{1}{2}b_{\ast}(i, j)}\Big) \leq  \Big(\prod_{i, j} C_{ij} \Big)   \Big(\prod_{i, j} p^{\frac{1}{4}b_{\ast}(i, j)}\Big).$$
We now observe that
$$\sum_{2 \leq i \leq n} (n+1 - i) b_{\ast}(i, n) +  \sum_{1 \leq  j \leq n} n b_{\ast}(1, j) \leq -  \sum_{i, j} (j-i+1)m_{ij},$$
and so
$$ {\rm Kl}_{p}(\underline{m}, \psi, \psi', w_{\ast})  \ll  \Big(\prod_{i, j} C_{ij} \Big)^{1- \frac{1}{4n}}.$$ 

\section{An exact evaluation}



Here we prove Corollary \ref{cor6}. For the vector $r = (1, \ldots, 1)$, the relevant $\underline{m}$ satisfying \eqref{rm} are 
\begin{itemize}
\item $m_{1n} = 1$, $m_{ij} = 0$ \text{ otherwise;}
\item $m_{1k} = m_{k+1, n} = 1$, $m_{ij} = 0$ \text{ otherwise,} for some $1 \leq k \leq n-1$. 
\end{itemize}
In the first case we obtain
$${\rm Kl}_p(\underline{m}, \psi, \psi', w_{\ast}) =\sum_{c_{11}, \ldots, c_{1(n-2)} \, (\text{mod } p)} \sum_{c_{1(n-1)} \text{(mod } p)}\underset{c_{1n} \text{(mod } p)}{\left.\sum\right.^{\ast}} e\Big(\frac{\psi_n c_{1(n-1)} \overline{c_{1n}}}{p} + \frac{\psi'_n c_{1(n-1)}}{p}\Big).$$
The two innermost sums equal $p$, and so we obtain ${\rm Kl}_p(\underline{m}, \psi, \psi', w_{\ast})  = p^{n-1}.$

In the second case we consider first the case $2 \leq k \leq n-1$. Then ${\rm Kl}_p(\underline{m}, \psi, \psi', w_{\ast})$ contains the sum
$$\sum_{\substack{c_{1(k-1)}, c_{1k} \, (\text{mod }p)\\ (c_{1k}, p) = 1}} e\Big(\frac{\psi_k c_{1(k-1)} \overline{c_{1k}}}{p}\Big) = 0.$$
Finally, if $k = 1$, we obtain 
$${\rm Kl}_p(\underline{m}, \psi, \psi', w_{\ast}) =\sum_{c_{3n}, \ldots, c_{nn} \, (\text{mod } p)} \underset{c_{11} ,c_{2n} \text{(mod } p)}{\left.\sum\right.^{\ast}}  e\Big(\frac{\psi_1 \overline{c_{11}} }{p} + \frac{\psi'_1 c_{2 n}}{p}\Big).$$
The two inner sums equal 1, and so ${\rm Kl}_p(\underline{m}, \psi, \psi', w_{\ast})  = p^{n-2}.$ This completes the proof.

\section{Beyond Sarnak's density conjecture}

We finally prove Proposition \ref{prop1}. This requires some minor modifications in Sections 4 and 5 of \cite{AB} that we now describe. We use the notation from \cite{AB}. Since $q$ is prime, the argument simplifies a bit, and we need \cite[Lemma 4.2]{AB} for $(\alpha, \beta) = (0, 0)$ and $(1, 0)$. For $(\alpha,  \beta) = (0, 0)$ we use it as is, for $(\alpha, \beta) = (1, 0)$ we make a small improvement. We recall that we need to count $x'_{ij}, y'_{ij}$ satisfying the size conditions \cite[(4.15)]{AB} and the congruences \cite[(4.17), (4.18)]{AB}, where in the case $\beta = 0$ the congruence (4.18) can be written more simply as (4.19). The count for (4.15) is given in (4.16). In order to count the saving imposed by the congruences (4.17), (4.19), we proceed as described after  (4.18), but obtain an extra saving for $y_{1n}'$ from (4.19) of size $p^{n-3}$. Thus we see that
\begin{displaymath}
\begin{split}
 C_{1, 0} & \leq \frac{p^{\frac{1}{6}(n^3 + 3n^2 + 2n - 12) +  n(n-1)  }}{p^{2(n-2)} \cdot p^{\frac{1}{2}(n-2)(n-3)} \cdot p^{n-2 + \frac{1}{2}(n-1)(n-2)   } \cdot p^{n-3}}
 = \mathcal{N}_q q^{(n-1)^2} q^{n+2} \leq \mathcal{N}_q q^{(n-1)^2} q^{\frac{7}{4}(n-1)}
 \end{split}
 \end{displaymath}
for $n  \geq 5$. (It is important to have exponent strictly less than 2.) Together with the improved bound of Corollary \ref{cor5}, we  now obtain the following variation of \cite[Theorem 4.3]{AB} under the additional assumptions that $n \geq 5$, $q$ is prime and $c = (c_1, \ldots, c_{n-1}) = (q^n \gamma_1, \ldots, q^n\gamma_{n-1})$ satisfies $\gamma_j < q^{2}$ (which implies that only the cases $(\alpha, \beta) = (0, 0)$ and $(1, 0)$ are relevant in the proof):
\begin{equation}\label{new}
S_{q, w_{\ast}}^v(M, N, c) \ll q^{\varepsilon} \frac{\mathcal{N}_q}{q^{n-1}}   \frac{c_1\cdot \ldots \cdot c_{n-1}}{(\gamma_1 \cdot\ldots \cdot \gamma_{n-1})^{\delta}} \big(\gamma_1 \cdots \gamma_{n-1} , q^{\infty}\big)^{3/4 + \delta}
\end{equation}
 with $\delta$ as in our Corollary \ref{cor5}. We can and will assume without loss of generality that $\delta < 1/10$. 

With this in hand, we move to the discussion after \cite[Lemma 5.1]{AB}. The key point is that we can now slightly relax \cite[(5.3)]{AB} to 
$$mZ \leq K^{-1} (1 + 1/r + R)^{-K} q^{n+1 + \delta_0}$$
for some sufficiently small $\delta_0 > 0$ to be chosen in a moment (cf.\ also \cite[(1.4)]{AB}). If $n \geq 4$ and $\delta_0 < 1/10$, then by Remark 2 after \cite[Lemma 4.1]{AB} we can still conclude that only the trivial Weyl element and $w_{\ast}$ give a non-zero contribution. The contribution of the trivial Weyl element is given in \cite[(5.4)]{AB}, for the contribution of the $w_{\ast}$ we invoke \eqref{new}
getting
$$\preccurlyeq q^{\varepsilon} \frac{\mathcal{N}_q}{q^{n-1}} Z^{2\eta_1} \sum_{ \gamma_1, \ldots, \gamma_{n-1} \ll q^{1+\delta_0}} \frac{(\gamma_1 \cdots \gamma_{n-1}, q^{\infty})}{(\gamma_1\cdots \gamma_{n-1})^{\delta}} \preccurlyeq \mathcal{N}_q Z^{2\eta_1} $$
if $(1 + \delta_0)(1 - \delta) < 1$. In this way we obtain an improved version of \cite[Proposition 5.2]{AB} where (under the current assumptions $q$ prime, $n \geq 5$) we only need the relaxed condition $T \leq M^{-K} q^{n+1 + \delta_0}$. This can be directly inserted into \cite[Corollary 6.11]{AB} and completes the proof of Proposition \ref{prop1}.

\section{Appendix: the case $G = {\rm GL}(4)$}

It might be useful for applications to use the method of proof of Theorem \ref{thm1} and Corollary~\ref{cor3} to obtain explicit formulae and non-trivial bounds for \emph{all} Weyl elements in the case $G = {\rm GL}(4)$. See \cite[Appendix]{GSW} for a list of the relevant consistency relations, and \cite{Fr0} for a version in terms of Pl\"ucker coordinates. 

There are 8 Weyl elements. We do not need to talk about the trivial Weyl element and the  Voronoi element $\left(\begin{smallmatrix} & 1\\ I_{3} & \end{smallmatrix}\right)$, which is covered in \cite{Fr} (with non-trivial bounds following from Deligne's estimates). The element $\left(\begin{smallmatrix} &   I_{3} \\ 1& \end{smallmatrix}\right)$ is analogous. 

\bigskip

(a) For the long Weyl element $w_l = \left(\begin{smallmatrix}&&&-1\\&&1\\&-1\\1\end{smallmatrix}\right)$ we obtain by \eqref{partial} that ${\rm Kl}_{p}(\underline{m}, \psi, \psi', w_l)$ is given by
{\footnotesize \begin{equation}\label{81}
\begin{split}
\sum_{\underline{c} \in \mathcal{C}_{w_l}(\underline{m})} &e\Bigg(\frac{\psi_1 \overline{c_{11}}}{p^{m_{11}}} + \psi_2\Big(\frac{c_{11} \overline{c_{12}}}{p^{m_{12}}} + \frac{\overline{c_{22}}}{p^{-m_{11} + m_{12} + m_{22}}}\Big) + \psi_3\Big(\frac{c_{12} \overline{c_{13}}}{p^{m_{13}}} + \frac{c_{22} \overline{c_{23}}}{p^{-m_{12} + m_{13}+ m_{23}}} + \frac{\overline{c_{33}}}{p^{-m_{12} - m_{22} + m_{13} + m_{23} + m_{33}}}\Big)\\
& + \frac{\psi'_{1}c_{33}}{p^{m_{33}}} + \psi'_2\Big(\frac{c_{23}\overline{c_{33}}}{p^{m_{23}}} + \frac{c_{22}}{p^{-m_{33}  + m_{23} + m_{22}}}\Big) + \psi'_3\Big(\frac{c_{13} \overline{c_{23}}}{p^{m_{13}}} + \frac{c_{12} \overline{c_{22}}}{p^{-m_{23} + m_{13} + m_{12}}} + \frac{c_{11}}{p^{-m_{23} - m_{22} + m_{13} + m_{12} + m_{11}}}\Big)\Bigg)
\end{split}
\end{equation}}
where the sum runs over
\begin{displaymath}
\begin{split}
& c_{11} \, (\text{mod } p^{m_{11} + m_{12} + m_{13}}), \quad c_{12} \, (\text{mod } p^{  m_{12} + m_{13}}), \quad c_{13} \, (\text{mod } p^{ m_{13}}),\\
&c_{22} \, (\text{mod } p^{m_{22} + m_{23 }}), \quad c_{23} \, (\text{mod } p^{  m_{23}}), \quad c_{33} \, (\text{mod } p^{ m_{33}})
\end{split}
\end{displaymath}
subject to $(c_{ij}, p^{m_{ij}}) = 1$.

\bigskip

(b) For the  Weyl element $w_{\ast} = \left(\begin{smallmatrix}&&&-1\\&-1\\&&-1\\1\end{smallmatrix}\right)$ we obtain by \eqref{partial1} that ${\rm Kl}_{p}(\underline{m}, \psi, \psi', w_{\ast})$ is given by
{\footnotesize \begin{equation}\label{82}
\begin{split}
\sum_{\underline{c} \in \mathcal{C}_{w_{\ast}}(\underline{m})} &e\Bigg(\frac{\psi_1 \overline{c_{11}}}{p^{m_{11}}} + \psi_2\Big(\frac{c_{11} \overline{c_{12}}}{p^{m_{12}}} + \frac{c_{33}\overline{c_{23}}}{p^{-m_{11} + m_{12} + m_{22}}}\Big) + \psi_3\Big(\frac{c_{12} \overline{c_{13}}}{p^{m_{13}}} + \frac{  \overline{c_{33}}}{p^{-m_{12} + m_{13}+ m_{33}}} \Big)\\
& + \frac{\psi'_{1}c_{23}}{p^{m_{23}}}  + \psi'_3\Big(\frac{c_{13} \overline{c_{33}}}{p^{m_{13}}} + \frac{c_{12} }{p^{-m_{33} + m_{13} + m_{12}}} \Big)\Bigg)
\end{split}
\end{equation}}
where the sum runs over
\begin{displaymath}
\begin{split}
& c_{11} \, (\text{mod } p^{m_{11} + m_{12} + m_{13}}), \quad c_{12} \, (\text{mod } p^{  m_{12} + m_{13}}), \quad c_{13} \, (\text{mod } p^{ m_{13}}),\\
&  c_{23} \, (\text{mod } p^{    m_{33}}), \quad c_{33} \, (\text{mod } p^{ m_{23} + m_{33}})
\end{split}
\end{displaymath}
subject to $(c_{ij}, p^{m_{ij}}) = 1$. These two cases are illustrative examples for the general results presented in the body of the paper;  Corollaries \ref{cor3} and \ref{cor5} establish non-trivial bounds. 

\bigskip

(c) We are left with three remaining Weyl elements. We first consider $w = \left(\begin{smallmatrix}&&1\\&&&1\\1&&\\&1\end{smallmatrix}\right)$. Here we choose the representative $w = s_{\alpha_2} s_{\alpha_1} s_{\alpha_3} s_{\alpha_2}$, so that
 \ba
\ul\gamma = \rb{\alpha_{22},\alpha_{12},\alpha_{23},\alpha_{13}}.
\ea
We then obtain for ${\rm Kl}_{p}(\underline{m}, \psi, \psi', w )$   by similar computations the formula
{\footnotesize \begin{displaymath}
\begin{split}
\underset{\substack{c_{22} \, (\text{mod } p^{m_{12} + m_{22} + m_{23}})\\ c_{12} \, (\text{mod } p^{m_{12} + m_{13}})\\ c_{23} \, (\text{mod } p^{m_{13} + m_{23}})\\ c_{13} \, (\text{mod } p^{m_{13}})}}{\left.\sum \right.^{\ast}} &e\Bigg(\psi_1 \Big(\frac{c_{22}\overline{c_{12}}}{p^{m_{12}}}  + \frac{c_{23} \overline{c_{13}}}{p^{-m_{22} + m_{12} + m_{13}}}\Big) + \frac{\psi_2\overline{c_{22}}}{p^{m_{22}}}  + \psi_3\Big(\frac{c_{22}\overline{c_{23}}}{p^{m_{23}}} + \frac{c_{12}\overline{c_{13}}}{p^{-m_{22} + m_{13} + m_{23}}}\Big) + \frac{\psi_2'c_{13}}{p^{m_{13}}}\Bigg)
\end{split}
\end{displaymath}}
where $\sum^{\ast}$ indicates the usual coprimality condition $(c_{ij}, p^{m_{ij}}) = 1$.

 Here we obtain a saving relative to the trivial bound as follows: the $c_{13}$-sum and the $c_{22}$-sum save $O ( |\psi'_2|^{-1/2}_p p^{-m_{13}/2})$ and $O( |\psi_2|^{-1/2}_p p^{-m_{22}/2})$ respectively. If $p \nmid c_{22}$, an analogous argument works for the indices $(12), (23)$, and if $m_{22} = 0$ (which is the only situation in which we can have $p \mid c_{22}$), then the $c_{22}$-sum implies 
 $$\psi_1 \overline{c_{12}} p^{m_{23}} + \psi_3 \overline{c_{23}} p^{m_{12}} \equiv 0 \, (\text{mod } p^{m_{12} + m_{23}}).$$
 Thus in all cases we get a saving of 
 $$ \max(|\psi_1|_p^{-1/2}, |\psi_2|_p^{-1/2}, |\psi_3|_p^{-1/2}, |\psi_1'|_p^{-1/2}) p^{-\frac{1}{2}\max(m_{12}, m_{13}, m_{22}, m_{23})},$$
 so that one can choose $\delta <  1/16$ in Corollary \ref{cor5} below. This is just for concreteness -- it is easy to improve the numerical value.


\bigskip

(d) Finally we treat the Weyl element $w = \left(\begin{smallmatrix}&& & -1\\&&1&\\1&&\\&1\end{smallmatrix}\right)$, the Weyl element $  \left(\begin{smallmatrix}&&1 & \\& & &1&\\&-1&\\1&\end{smallmatrix}\right)$ being analogous. Here we choose a representative $w = s_{\alpha_1} s_{\alpha_2} s_{\alpha_3} s_{\alpha_1} s_{\alpha_2}$, so that \ba
\ul\gamma = \rb{\alpha_{11}, \alpha_{12}, \alpha_{13}, \alpha_{22}, \alpha_{23}}.
\ea
We then obtain for ${\rm Kl}_{p}(\underline{m}, \psi, \psi', w )$    the formula
{\footnotesize \begin{displaymath}
\begin{split}
\underset{\substack{c_{11} \, (\text{mod } p^{m_{11} + m_{12} + m_{13}})\\ c_{12} \, (\text{mod } p^{m_{12} +  m_{13}})\\ c_{13} \, (\text{mod } p^{  m_{13}})\\ c_{22} \, (\text{mod } p^{m_{22} + m_{23}})\\ c_{23} \, (\text{mod } p^{m_{23}})}}{\left.\sum \right.^{\ast}} &e\Bigg( \frac{\psi_1\overline{c_{11}}}{p^{m_{11}}} +  \psi_2\Big(\frac{c_{11}\overline{c_{12}}}{p^{m_{12}}} + \frac{\overline{c_{22}}}{p^{-m_{11} + m_{12} + m_{22}}}\Big) + \psi_3\Big(\frac{c_{12} \overline{c_{13}}}{p^{m_{13}}} + \frac{c_{22} \overline{c_{23}}}{p^{-m_{12} + m_{13} + m_{23}}}\Big) \\
&+ \frac{\psi_2'c_{23}}{p^{m_{23}}} + \psi_3' \Big(\frac{c_{11}}{p^{-m_{22}-m_{23} + m_{11} + m_{12} + m_{13}}}+ \frac{c_{12} \overline{c_{22}}}{p^{-m_{23} + m_{12} + m_{13}}} + \frac{c_{13} \overline{c_{23}}}{p^{m_{13}}}\Big)\Bigg).
\end{split}
\end{displaymath}}
Arguing as in Section \ref{non-trivial} with the ordering
$$(13) < (12) < (11) < (23) < (22)$$
we obtain a saving of 
$$\ll_{\psi, \psi'} p ^{-\frac{1}{2}\max( m_{13}, m_{12}, m_{11}, m_{23}, m_{22} - m_{11})} \ll p ^{-\frac{1}{4}\max( m_{13}, m_{12}, m_{11}, m_{23}, m_{22} )}$$
so that we can choose $\delta < 1/36$ in the following corollary. Again it is very easy to improve this numerical value. 
We conclude

\begin{cor}
Let $w$ be a non-trivial Weyl element for $G = {\rm GL}(4)$ and $C = (p^{r_1}, p^{r_2}, p^{r_3})$. Then there exists an absolute constant $\delta  > 0$ such that 
$${\rm Kl}_{p}(  \psi, \psi', C^{\ast}w)  \ll \big(\max_{1 \leq j \leq n} (|\psi_j|^{-1/2}_p ,  |\psi'_j|^{-1/2}_p ) \big) \big( p^{r_1+r_2+r_3}\big)^{1-\delta}.$$

\end{cor}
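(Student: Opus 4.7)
The plan is to handle the eight Weyl elements of $\mathrm{GL}(4)$ case by case, invoking the tools already developed. The trivial Weyl element is, well, trivial. The two Voronoi-type elements $\left(\begin{smallmatrix} & 1 \\ I_3 & \end{smallmatrix}\right)$ and $\left(\begin{smallmatrix} & I_3 \\ 1 & \end{smallmatrix}\right)$ yield hyper-Kloosterman sums (see \cite{Fr}), for which Deligne's estimates give a power saving well in excess of what the corollary requires. The long Weyl element $w_l$ is covered directly by Corollary \ref{cor3}, and the element $w_\ast$ by Corollary \ref{cor5}. This leaves exactly three Weyl elements to treat, all of which admit explicit exponential-sum representations derived in the appendix by the same method as Theorems \ref{thm1} and \ref{thm2}.

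For each of the three remaining Weyl elements, I would follow the strategy of Section \ref{non-trivial} verbatim. Namely, one fixes a total ordering on the indices $(i,j)$ appearing in the explicit formula, isolates a single variable $c_{ij}$, and estimates the resulting inner sum as a classical $\mathrm{GL}(2)$-Kloosterman sum via the Weil bound. Concretely, for the element $w = \left(\begin{smallmatrix} & & 1 \\ & & & 1 \\ 1 & & \\ & 1 & \end{smallmatrix}\right)$ one uses the two ``central'' inner sums over $c_{13}$ and $c_{22}$ (which give savings of $|\psi_2'|_p^{-1/2} p^{-m_{13}/2}$ and $|\psi_2|_p^{-1/2} p^{-m_{22}/2}$ respectively), and, in the degenerate situation $m_{22}=0$, one instead exploits the joint sum over $(c_{12},c_{23})$ as in display \eqref{av}. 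For the element $w = \left(\begin{smallmatrix} & & & -1 \\ & & 1 & \\ 1 & & \\ & 1 & \end{smallmatrix}\right)$, I would use the ordering
\[
(1,3) < (1,2) < (1,1) < (2,3) < (2,2),
\]
which is precisely tailored so that at each step the relevant modulus in the inner Kloosterman sum dominates the other contributions; the computation runs parallel to the $w_l$-argument already given. The third element $\left(\begin{smallmatrix} & & 1 & \\ & & & 1 \\ & -1 & & \\ 1 & & \end{smallmatrix}\right)$ is treated in the same way by transposing/reflecting the ordering.

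In every case, an application of Weil's bound to the distinguished variable $c_{ij}$ yields, after summing trivially over the remaining variables, a bound of the shape
\[
{\rm Kl}_p(\underline{m},\psi,\psi',w) \;\ll\; \bigl(\max_j |\psi_j|_p^{-1/2},\, |\psi_j'|_p^{-1/2}\bigr)\, p^{r_1+r_2+r_3 - m_{ij}/2 + O(\mu_{ij})},
\]
with $\mu_{ij}$ the maximum of the $m_{\alpha\beta}$ strictly preceding $(i,j)$ in the ordering. Choosing the index $(i,j)$ so that $m_{ij}$ is essentially the largest exponent, one obtains a saving of size $p^{-\delta_0 \max_{i,j} m_{ij}}$ for some $\delta_0=\delta_0(w)>0$. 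Summing over the at most $p^{\varepsilon(r_1+r_2+r_3)}$ choices of $\underline{m}\in\mathcal{M}_w(r)$ then yields the claimed bound with $\delta=\min_w \delta_0(w)/C$ for a suitable absolute constant $C$.

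The main obstacle, as already foreshadowed in the body of the paper, is combinatorial rather than analytic: one has to verify for each of the three remaining Weyl elements that the chosen total ordering on indices is compatible with the geometry of the exponential sum, so that the argument of the extracted $\mathrm{GL}(2)$-Kloosterman sum has both entries of controlled $p$-adic valuation and the GCD loss $(\psi_j c_{i(j-1)}, p^{m_{ij}})^{1/2}$ remains harmless. For $\mathrm{GL}(4)$ this is a finite (and very short) check. The more delicate point is the degenerate subcase $m_{ij}=0$ for the distinguished index, where one has no $c_{ij}$ to sum over; here one passes to the next index in the ordering and uses the joint-sum trick of \eqref{av}, exactly as in Section \ref{non-trivial}.
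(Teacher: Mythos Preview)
Your proposal is correct and follows essentially the same approach as the paper's appendix: the same case division over the eight Weyl elements, the same reliance on Corollaries \ref{cor3} and \ref{cor5} and Deligne's bound for the Voronoi elements, the same explicit formulae and the same ordering $(13)<(12)<(11)<(23)<(22)$ for the element $\left(\begin{smallmatrix}&&&-1\\&&1\\1\\&1\end{smallmatrix}\right)$, and the same treatment of the element $\left(\begin{smallmatrix}&&1\\&&&1\\1\\&1\end{smallmatrix}\right)$ via the $c_{13}$- and $c_{22}$-sums together with the joint $(c_{12},c_{23})$-sum in the degenerate case $m_{22}=0$. One minor inaccuracy in your final paragraph: the degeneracy at $m_{22}=0$ is not that ``one has no $c_{22}$ to sum over'' (the variable still ranges over a nontrivial residue class), but rather that $c_{22}$ may then be divisible by $p$, which spoils the gcd factor in Weil's bound when one tries to sum over $c_{12}$ or $c_{23}$; the paper's phrasing makes this explicit.
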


\vspace{0.4cm}

\noindent \textbf{Conflicts of interest.} The authors declare that   there is no conflict of interest.\\
\textbf{Availability of data and material.} Not applicable.

\end{document}